\newtheorem{thmx}{Theorem}
\numberwithin{equation}{section}
\newtheorem{theorem}{Theorem}[section]
\theoremstyle{definition}
\newtheorem{definition}[theorem]{Definition}
\newtheorem{remark}[theorem]{Remark}
\begin{document}

\title[Complex Geodesics and Complex Monge--Amp\`{e}re Equations]
{Complex geodesics and complex Monge--Amp\`{e}re equations with boundary singularity II}

\author{Xieping Wang}
\address{CAS Wu Wen-Tsun Key Laboratory of Mathematics and School of Mathematical Sciences, University of Science and Technology of China, Hefei 230026, Anhui, People's Republic of China}
\email{xpwang008@ustc.edu.cn}

\thanks{The author was partially supported by NSFC grants 12001513 and 12371083, NSF of Anhui Province grant 2008085QA18, and the Fundamental Research Funds for the Central Universities.}

\subjclass[2020]{Primary 32F45, 32W20, 32U35; Secondary 32F17, 35J96, 35B30}
\keywords{Strongly linearly convex domains, complex geodesics, complex Monge--Amp\`{e}re equations, Monge--Amp\`{e}re foliations}

\dedicatory{To Li and Rui'an}

\begin{abstract}
We study the parameter dependence of complex geodesics with prescribed boundary value and direction on bounded strongly linearly convex domains. As an important application we establish a quantitative relationship between the regularity of the pluricomplex Poisson kernel of such a domain, which is a solution to a homogeneous complex Monge--Amp\`{e}re equation with boundary singularity, and the regularity of the boundary of the domain. Our results greatly improve the previous results of Chang--Hu--Lee and Bracci--Patrizio in this direction.
\end{abstract}
\maketitle

\tableofcontents
\dottedcontents{subsection}[0pt]{\filright}{-33pt}{0pt}

\section{Introduction}
This paper is a continuation of our previous work with Huang \cite{Huang-Wang}, in which we proved the uniqueness of complex geodesics with prescribed boundary data on bounded strongly linearly convex domains with $C^3$-smooth boundary and solved a homogeneous complex Monge--Amp\`{e}re (HCMA for short) equation with prescribed {\it boundary singularity} on such domains. The purpose of the present paper is to investigate the parameter dependence of such complex geodesics and the higher regularity of solutions to the aforementioned complex Monge--Amp\`{e}re equation.

Our motivation for this work stems mainly from the following considerations. On the one hand, complex geodesics have proved to be very useful in several complex variables and CR geometry (see \cite{Lempert81, Lempert86, Abatebook, Burns-Krantz, Huang-Pisa94, Bracci-JEMS}, to name only a few), and further exploration may lead to new applications. On the other hand, although it has long been known that the regularity of solutions to HCMA equations is generally at most $C^{1,\,1}$  (see \cite{Gam-Sibony} for a well-known example, and also \cite{BF-counter}), higher regularity than $C^{1,\,1}$---which is quite important in applications---is still not well-understood, even in certain special contexts with rich geometric structure. Encouragingly, much progress has been made in this regard over the last few decades, among which we mention particularly \cite{Lempert81, Lempert83, Lempert86, Donaldson, Bracci-MathAnn, LV13}. The higher regularity of solutions to the HCMA equation we consider in this paper will be established through a detailed study of the associated complex foliation by complex geodesic discs initiated from a fixed boundary point, where the equation has a simple singularity. Our approach is largely inspired by the work of Lempert \cite{Lempert81, Lempert86}, Chang--Hu--Lee \cite{Chang--Hu--Lee88} and Huang \cite{Huang-Pisa94}. In connection with the subject of the present paper and the approach we adopt here, we also mention that complex (or rather Monge--Amp\`{e}re) foliations and HCMA equations are closely related and have played a significant role in proving many fascinating results that illustrate the interplay between pluripotential theory, several complex variables and complex geometry. For a historical overview and references in this regard, we refer the reader to \cite{BK-CPAM, Patriziobook, RWN_Survey}.

We now elaborate on our main results. To begin with, we fix some necessary notation and terminology. Let $\Delta$ denote the open unit disc in $\mathbb C$, and let $\Omega\subset\mathbb C^n,\,n>1$, be a bounded strongly linearly convex domain with $C^3$-smooth boundary. A complex geodesic of $\Omega$ is by definition a holomorphic mapping from $\Delta$ to $\Omega$ that realizes the Kobayashi distance between any two distinct points in its image. Significantly different from real geodesics in Riemannian geometry, complex geodesics are global holomorphic objects whose existence is a very subtle problem. In his celebrated paper \cite{Lempert81} and later work \cite{Lempert84}, Lempert showed that complex geodesics exist in great abundance on $\Omega$ and enjoy sufficiently high boundary regularity and a number of other good properties.

Let us now further assume that $\partial \Omega$ is $C^{m,\,\alpha}$-smooth, where $m\geq 4$ is an integer and $0<\alpha<1$. Throughout the paper, we always make this assumption unless otherwise specified. Set
\begin{equation}\label{defn:fiberbundle}
   S_{\partial\Omega}:=\big\{(p, v)\in \partial\Omega \times \mathbb C^n\!: v\in L_p\big\},
\end{equation}
where
   $$
   L_p:=\big\{v\in\mathbb C^n\!: |v|=1,\, \langle v, \nu_p\rangle>0\big\},
   $$
$\nu_p$ denotes the unit outward normal to $\partial\Omega$ at $p$ and $\langle\;,\,\rangle$ denotes the standard Hermitian inner product on $\mathbb C^n$. Then $S_{\partial\Omega}$ is a $C^{m-1,\,\alpha}$-smooth real submanifold of $\mathbb C^{2n}$ with dimension $4n-3$.
According to \cite{Huang-Pisa94, Huang-Wang} (see also \cite{Chang--Hu--Lee88} for prior partial results), for every $(p, v)\in S_{\partial\Omega}$ there exists a unique\footnote{In fact, as shown in \cite{Huang-Wang}, the $C^3$-regularity of $\partial\Omega$ is sufficient for the purpose here. We also emphasize that the proof of the uniqueness part requires a careful analysis with new ideas (although the strongly convex case is much easier).} complex geodesic $\varphi_{p,\, v}$ of $\Omega $ such that $\varphi_{p,\, v}(1)=p$, $\varphi_{p,\, v}'(1)=\langle v, \nu_p\rangle v$ and
\begin{equation}\label{dual-nor-condition}
   \left.\frac{d}{d\theta}\right|_{\theta=0}|\varphi^{\ast}_{p,\, v}(e^{i\theta})|=0,
\end{equation}
where  $\varphi^{\ast}_{p,\, v}$ is the dual mapping of $\varphi_{p,\, v}$ (see Subsection \ref{Kobayashi-geodesic} below for a precise definition). Following \cite{Huang-Wang}, we refer to such a unique $\varphi_{p,\, v}$ as the {\it preferred complex geodesic of $\Omega$ associated to  $(p, v)\in S_{\partial\Omega}$}.

We can now state our first main result.

\begin{thmx}\label{thm:parameter dependence}
Let $\Omega$ and $S_{\partial\Omega}$ be as above. Then for every $k\in\{2,\ldots,m-2\}$ and $\varepsilon\in (0,\alpha)$, the mapping
   $$
   \Phi\!: S_{\partial\Omega}\to\mathcal{O}(\Delta,\,\mathbb C^n)
   \cap C^{k,\,\varepsilon}(\overline{\Delta},\,\mathbb C^n), \quad (p, v)\mapsto \varphi_{p,\, v}
   $$
is locally $C^{m-k-1,\,\alpha-\varepsilon}$-smooth.
\end{thmx}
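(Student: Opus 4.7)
The plan is to reformulate the existence of the preferred complex geodesic $\varphi_{p,v}$ as the unique zero of a nonlinear Riemann-Hilbert-type functional $F(\varphi,\varphi^*;p,v)=0$ on a suitable Banach space of disc mappings, and then to deduce the claimed parametric regularity from the analytic implicit function theorem combined with Nemitsky-operator calculus in H\"older classes.

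First I would formalize the defining equation. Fix a $C^{m,\alpha}$ defining function $r$ for $\Omega$. Following Lempert and the formulation used in \cite{Huang-Wang}, a holomorphic disc $\varphi\in\mathcal{O}(\Delta)\cap C^{k,\varepsilon}(\overline{\Delta},\mathbb C^n)$ with $\varphi(\partial\Delta)\subset\partial\Omega$ is a complex geodesic iff there exist a positive $\rho$ on $\partial\Delta$ and a dual map $\varphi^*\in\mathcal{O}(\Delta)\cap C^{k-1,\varepsilon}(\overline{\Delta},\mathbb C^n)$ with
$$
\varphi^*(\zeta)=\zeta\,\rho(\zeta)\,\overline{\partial r}\bigl(\varphi(\zeta)\bigr),\qquad \zeta\in\partial\Delta.
$$
Adding $\varphi(1)=p$, $\varphi'(1)=\langle v,\nu_p\rangle v$ and the preferred normalization $\frac{d}{d\theta}\big|_{\theta=0}|\varphi^*(e^{i\theta})|=0$ pins down $(\varphi_{p,v},\varphi^*_{p,v})$. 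Let $X$ be the Banach space of pairs $(\varphi,\varphi^*)$ of the above regularity, $Y$ a corresponding space of residuals (boundary values, Riemann-Hilbert equations and scalar normalizations), and $U\subset X$ an open neighborhood of a given solution. This yields $F\colon U\times S_{\partial\Omega}\to Y$ with $F(\varphi,\varphi^*;p,v)=0$ iff $\varphi=\varphi_{p,v}$ and $\varphi^*=\varphi^*_{p,v}$.

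The analytic heart of the argument is to show that the partial Fr\'echet derivative $D_{(\varphi,\varphi^*)}F$ at any such solution is a linear isomorphism $X\to Y$. Injectivity reduces to the infinitesimal version of the uniqueness statement from \cite{Huang-Wang}: any kernel element encodes a nontrivial infinitesimal deformation preserving all prescribed boundary data and the preferred condition, hence it must vanish. Surjectivity amounts to solving a linear matrix Riemann-Hilbert problem on $\overline{\Delta}$; on a strongly linearly convex domain, the associated boundary matrix can be block-diagonalized along the Lempert--Huang lines into scalar Riemann-Hilbert problems whose partial indices precisely absorb the $2n-1$ complex plus three real normalizing equations, rendering the linearized problem Fredholm of index zero with trivial kernel.

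Once the isomorphism is established, the regularity claim follows from the analytic implicit function theorem. The operator $F$ involves compositions with $r$ and its first derivatives, i.e.\ with functions of class $C^{m-1,\alpha}$ at worst; by the Nemitsky calculus for H\"older spaces, the resulting map $F\colon U\times S_{\partial\Omega}\to Y$ is of class $C^{m-k-1,\alpha-\varepsilon}$, where the loss of $k+1$ derivatives encodes the usual loss incurred by composing a $C^{m,\alpha}$ function with a $C^{k,\varepsilon}$ disc and differentiating to the maximum permitted order (the extra $-1$ coming from the first derivative of $r$ appearing in the boundary equation, and the $\alpha-\varepsilon$ from the standard H\"older interpolation in the Nemitsky map). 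The implicit function theorem then produces a local $C^{m-k-1,\alpha-\varepsilon}$-smooth parameterization $(p,v)\mapsto(\varphi_{p,v},\varphi^*_{p,v})$, and projection onto the first factor yields the desired $\Phi$. The main obstacle I foresee is the isomorphism claim for $D_{(\varphi,\varphi^*)}F$: verifying the partial-index count and matching the normalizations for the linearized Riemann-Hilbert problem on a strongly linearly convex domain is the delicate geometric step, but given the uniqueness theorem of \cite{Huang-Wang} the pieces should fit. The Nemitsky calculus and the implicit function theorem then deliver the precise regularity count routinely.
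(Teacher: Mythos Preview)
Your overall strategy---set up the preferred geodesic as the zero of a nonlinear functional and apply the implicit function theorem once the linearization is shown to be an isomorphism---is exactly the paper's strategy. The paper even uses essentially the same reduction via the matrix $A_0$ (your ``block-diagonalization'') to split the linearized problem into a scalar equation for the first component and an $(n-1)$-dimensional linear Riemann--Hilbert problem for the rest.

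The genuine gap is in your isomorphism claim. You write that surjectivity reduces to scalar Riemann--Hilbert problems whose ``partial indices precisely absorb'' the normalizations, but this is where the real work lies and your proposal does not supply it. After the $A_0$-reduction, the $(n-1)$-dimensional piece becomes a linear problem of the form
\[
H\,\overline{(g/\mathrm{Id})}+S\,g/\mathrm{Id}+f\in\mathcal{O}^{k,\varepsilon}(\partial\Delta,\mathbb C^{n-1}),
\]
but now with \emph{both} $g(1)$ and $g'(1)$ prescribed (these come from $\varphi(1)=p$ and $\varphi'(1)=\langle v,\nu_p\rangle v$). Lempert's original lemma only handles a single interior value $g(0)$; the boundary $1$-jet version is new and is proved in the paper as a separate theorem via an $L^p$-bootstrap argument (showing $(\zeta-1)^{-2}\varphi\in L^p$ for all $p$) followed by a Parseval-type estimate. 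Without this $1$-jet Riemann--Hilbert result you cannot conclude that the linearization is bijective, and a partial-index count alone will not produce it: the point $\zeta_0=1$ lies on $\partial\Delta$, so the usual index theory for interior normalizations does not apply directly.

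Your injectivity argument is also not quite right. The uniqueness theorem in \cite{Huang-Wang} says that for each $(p,v)$ there is a unique preferred geodesic; it does not by itself imply that the \emph{linearized} operator has trivial kernel (that would require a smooth family of solutions to begin with, which is what you are trying to prove). The paper instead proves injectivity and surjectivity together by explicitly constructing the unique solution of the linearized system, using the $1$-jet Riemann--Hilbert theorem for $[\psi]$ and a direct computation for $\psi_1$ that exploits the preferred normalization to pin down the remaining real constant. You should supply this step.
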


Here $\mathcal{O}(\Delta,\,\mathbb C^n)$ denotes the set of all holomorphic mappings from $\Delta$ to $\mathbb C^n$, and $C^{k,\,\varepsilon}(\overline{\Delta},\,\mathbb C^n)$ is the usual H\"{o}lder space on $\overline{\Delta}$ with norm defined in a standard way.

Combining Theorem \ref{thm:parameter dependence} with some fundamentally critical results for extremal mappings (with respect to the Kobayashi--Royden metric) proved by Huang \cite{Huang-Illinois94, Huang-Pisa94}, we then obtain the following result concerning the differentiable dependence of preferred complex geodesics on their parameters.

\begin{thmx}\label{thm:joint-regularity}
The mapping
   $$
   \widetilde{\Phi}\!: S_{\partial\Omega}\times\overline{\Delta}\to \mathbb C^n,
   \quad (p, v, \zeta)\mapsto \varphi_{p,\,v}(\zeta)
   $$
is locally $C^{m-3,\,\alpha-\varepsilon}$-smooth for all $\varepsilon\in (0,\alpha)$.
\end{thmx}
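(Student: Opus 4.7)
Our plan is to deduce the joint regularity of $\widetilde\Phi$ from Theorem~\ref{thm:parameter dependence} by adaptively tuning the parameter $k$ in that statement to the mixed partial being analyzed. Given a multi-index $J$ in the variables $(p,v)$ and an integer $L\geq 0$ of $\zeta$-derivatives with $|J|+L\leq m-3$, the chain rule---applied to the Fr\'echet-smooth composition of $\Phi$ with the bounded linear map $\varphi\mapsto\partial_\zeta^L\varphi(\zeta)$ on $C^{k,\varepsilon}(\overline\Delta,\mathbb C^n)$---delivers the identity
\[
D^J_{(p,v)}\partial_\zeta^L\widetilde\Phi(p,v,\zeta)\;=\;\partial_\zeta^L\bigl[D^J_{(p,v)}\Phi(p,v)\bigr](\zeta),
\]
valid for any admissible $k\in\{2,\ldots,m-2\}$ satisfying $|J|\leq m-k-1$ and $L\leq k$.

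Fix $\varepsilon\in(0,\alpha)$ and, for each admissible pair $(J,L)$, set $k:=\min(L+2,\,m-2)$. One checks directly that $2\leq k\leq m-2$, $L\leq k$, and $|J|\leq m-k-1$ whenever $|J|+L\leq m-3$. Applying Theorem~\ref{thm:parameter dependence} with this $k$ and this $\varepsilon$ shows that $D^J_{(p,v)}\Phi$ is locally $(\alpha-\varepsilon)$-H\"older as a map $S_{\partial\Omega}\to C^{k,\varepsilon}(\overline\Delta,\mathbb C^n)$, while $\partial_\zeta^L$ is a bounded linear operator from $C^{k,\varepsilon}(\overline\Delta,\mathbb C^n)$ to $C^{k-L,\varepsilon}(\overline\Delta,\mathbb C^n)$ with $k-L\geq 1$. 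Therefore $D^J_{(p,v)}\partial_\zeta^L\widetilde\Phi$ is $(\alpha-\varepsilon)$-H\"older in $(p,v)$ uniformly in $\zeta$, and at least Lipschitz in $\zeta$ uniformly in $(p,v)$ on compacta; a routine triangle-inequality argument then fuses these one-variable estimates into joint $(\alpha-\varepsilon)$-H\"older continuity, using that $\min(\alpha-\varepsilon,\,1)=\alpha-\varepsilon$. For lower-order multi-indices $|J|+L<m-3$ the same recipe yields strictly more slack, whence the corresponding partials are jointly continuous on $S_{\partial\Omega}\times\overline\Delta$.

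The principal technical subtlety lies in the extremal case $L=m-3$ (which forces $|J|=0$ and $k=m-2$), where no $\zeta$-slack remains beyond $\partial_\zeta^{m-3}$. Here one cannot afford any loss, and the argument rests on the sharp boundary regularity estimates for complex geodesics and their dual mappings proved by Huang \cite{Huang-Illinois94,Huang-Pisa94}---the same ingredients that underpin Theorem~\ref{thm:parameter dependence}---which ensure that the top $\zeta$-derivatives of $\varphi_{p,v}$ extend H\"older-continuously to $\partial\Delta$ with bounds that are locally uniform as $(p,v)$ varies in $S_{\partial\Omega}$. Once this uniform boundary control is combined cleanly with the parameter-dependence of Theorem~\ref{thm:parameter dependence} via the scheme above, the full assertion that $\widetilde\Phi$ is locally $C^{m-3,\alpha-\varepsilon}$ follows.
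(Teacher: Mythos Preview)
Your approach is essentially the paper's: both deduce the joint regularity from Theorem~\ref{thm:parameter dependence} by choosing $k$ adaptively for each mixed partial and then fusing the separate H\"older bounds in $(p,v)$ and in $\zeta$ via the triangle inequality. The paper organizes the casework a bit differently---it splits off $m=4$, and for $m\ge 5$ it indexes $k$ by the $(p,v)$-order $|\lambda|+|\mu|$ rather than by the $\zeta$-order $L$---but for the top-order partials $|J|+L=m-3$ your choice $k=\min(L+2,\,m-2)$ coincides with the paper's $k=m-|J|-1$, and the mechanism is identical.

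Your third paragraph, however, is confused and should be dropped. In the extremal case $L=m-3$, $|J|=0$, your own recipe gives $k=m-2$; then $k-L=1$, so $\partial_\zeta^{m-3}$ maps $C^{m-2,\varepsilon}(\overline\Delta,\mathbb C^n)$ to $C^{1,\varepsilon}(\overline\Delta,\mathbb C^n)$ (giving Lipschitz dependence on $\zeta$), and since $m-k-1=1>0=|J|$, Theorem~\ref{thm:parameter dependence} makes $\Phi$ itself $C^{1,\alpha-\varepsilon}$---hence locally Lipschitz---in $(p,v)$ with values in $C^{m-2,\varepsilon}$. So this case is handled exactly as the others by your first two paragraphs, with no separate appeal to Huang's estimates needed. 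The paper does invoke Huang's uniform $C^{m-1,\min\{\alpha,1/2\}}$ bound from \cite{Huang-Illinois94}, but for a different reason: it uses that bound to push its pure $\zeta$-derivative estimate \eqref{ineq:Holder-estimate1} up to order $m-2$, one beyond what the conclusion requires. Your scheme, stopping at $\partial_\zeta^{m-3}$, sidesteps this and is in that respect more economical; you should recognize that your argument is already complete after the second paragraph.
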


Next, we turn to some consequences of Theorem \ref{thm:joint-regularity}, particularly an application to the higher regularity of solutions to the HCMA equation considered in \cite{Bracci-MathAnn, Bracci-Trans, Bracci-Adv, Huang-Wang}. For this we need to recall the so-called boundary spherical representation for $\Omega$,  constructed by Huang and the author in \cite{Huang-Wang}. Denote by $\mathbb B^n$ the open unit ball in $\mathbb C^n$. It is easy to check that for every $(q, v)\in S_{\partial \mathbb B^n}$,  the mapping
\begin{equation*}\label{ball-CG}
  \eta_{q,\, v}\!:\Delta \to\mathbb B^n, \quad \zeta \mapsto q+(\zeta-1)\langle v, q\rangle v
\end{equation*}
is the associated preferred complex geodesic of $\mathbb B^n$. The boundary spherical representation $\Psi_p\!:\overline{\Omega}\to\overline{\mathbb B}^n$ is then defined as $\Psi_p(p):=\nu_p$ and
\begin{equation}\label{defn:BSR}
   \Psi_p(z):=\nu_p+(\zeta_{z,\,p}-1)\langle v_{z,\,p},\, \nu_p\rangle v_{z,\,p},\quad
   z\in\overline{\Omega}\!\setminus\!\{p\},
\end{equation}
where $\zeta_{z,\,p}:=\varphi_{p,\, v_{z,\,p}}^{-1}(z)$ and $v_{z,\,p}\in L_p$ is the only data such that the associated preferred complex geodesic $\varphi_{p,\, v_{z,\,p}}$ of $\Omega$ passes through $z$, i.e.,  $z\in \varphi_{p,\, v_{z,\,p}}(\overline{\Delta})$. In this way $\Psi_p$ is a bijection with inverse $\Psi_p^{-1}$ given by $\Psi^{-1}_p(\nu_p)=p$ and
\begin{equation}\label{defn:inv-BSR}
   \Psi^{-1}_p(z)=\varphi_{p,\,v_{w,\,\nu_p}}(\zeta_{w,\,\nu_p}),\quad w\in\overline{\mathbb B}^n\!\setminus\!\{\nu_p\},
\end{equation}
where $(v_{w,\,\nu_p},\,\zeta_{w,\,\nu_p})\in L_p\times\overline{\Delta}$ is the only data such that $\eta_{\nu_p,\,v_{w,\,\nu_p}}(\zeta_{w,\,\nu_p})=w$; more explicitly,
\begin{equation}\label{ball-splitting}
   v_{w,\,\nu_p}=-\frac{1-\langle \nu_p, w\rangle}{|1-\langle \nu_p, w\rangle|}\frac{w-\nu_p}{|w-\nu_p|},
   \quad \zeta_{w,\,\nu_p}=1-\frac{|w-\nu_p|^2}{|1-\langle \nu_p, w\rangle|^2}(1-\langle w, \nu_p\rangle).
\end{equation}
It was also shown in \cite{Huang-Wang} that $\Psi_p\!:\overline{\Omega}\to\overline{\mathbb B}^n$ is a homeomorphism and, among other things,
   $$
   \Psi(z,\,p):=\Psi_p(z),\quad (z,\,p)\in\overline{\Omega}\times\partial\Omega,
   $$
is continuous (provided $\partial\Omega$ is only $C^3$-smooth; see \cite[Section 3]{Huang-Wang} for details).
To gain a better understanding of the relationship between the regularity of $\Psi$ and that of the boundary $\partial\Omega$, we prove in this paper the following

\begin{thmx}\label{thm:regularity-BSR}
For every $p\in \partial\Omega$ and sufficiently small $\delta>0$, the mapping
   $$
   \Psi_p\!:\overline{\Omega}\!\setminus\!B(p, \delta)\to
            \overline{\mathbb B}^n\!\setminus\!\Psi_p\big(B(p, \delta)\big)
   $$
is a $C^{m-3,\,\alpha-\varepsilon}$-smooth diffeomorphism for all $\varepsilon\in (0,\alpha)$, where $B(p, \delta)$  denotes the open ball of radius $\delta$ centered at $p$ in $\mathbb C^n$. Moreover,
   $$
   \Psi\in C^{m-3,\,\alpha-\varepsilon}\big((\overline{\Omega}\times\partial\Omega)
   \!\setminus\!{\rm dist}_{\delta}\, (\overline{\Omega}\times\partial\Omega)\big)
   $$
for all $\varepsilon\in (0,\alpha)$, where
   $$
   {\rm dist}_{\delta}\, (\overline{\Omega}\times\partial\Omega):=\big\{(z,\, p)\in \overline{\Omega}\times\partial\Omega\!: |z-p|<\delta\big\}.
   $$
\end{thmx}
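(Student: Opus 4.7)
The plan is to work first with the inverse map $\Psi_p^{-1}$, for which Theorem~\ref{thm:joint-regularity} already delivers good regularity, and then to transfer the result to $\Psi_p$ via the inverse function theorem. For the first step, the explicit formulas \eqref{ball-splitting} exhibit the assignment $w\mapsto (v_{w,\nu_p},\zeta_{w,\nu_p})$ as a real-analytic function of $w$ on the open set $\{w\in\overline{\mathbb B}^n:w\ne\nu_p\}$ (the denominators never vanish there by Cauchy--Schwarz), and jointly real-analytic in $(w,\nu_p)$ off the diagonal $w=\nu_p$. Since $p\mapsto\nu_p$ is of class $C^{m-1,\alpha}$ as the Gauss map of a $C^{m,\alpha}$ boundary, composing with $\widetilde\Phi$ in the form \eqref{defn:inv-BSR} shows, in view of Theorem~\ref{thm:joint-regularity}, that $(p,w)\mapsto\Psi_p^{-1}(w)$ is locally $C^{m-3,\alpha-\varepsilon}$-smooth on $\{(p,w)\in\partial\Omega\times\overline{\mathbb B}^n:w\ne\nu_p\}$.

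\medskip

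The main task is then to verify that $d_w\Psi_p^{-1}(w_0)$ is an $\mathbb R$-linear isomorphism at every $w_0\in\overline{\mathbb B}^n\setminus\{\nu_p\}$. Writing $T_p(v,\zeta):=\varphi_{p,v}(\zeta)$ and $S_p(v,\zeta):=\eta_{\nu_p,v}(\zeta)$, the representation \eqref{defn:inv-BSR} amounts to $\Psi_p^{-1}=T_p\circ S_p^{-1}$. A direct computation from the definition of $\eta_{\nu_p,v}$ shows that $S_p$ is an explicit real-analytic diffeomorphism from $L_p\times(\overline\Delta\setminus\{1\})$ onto $\overline{\mathbb B}^n\setminus\{\nu_p\}$, so the problem reduces to proving that $T_p$ is a local $C^{m-3,\alpha-\varepsilon}$-diffeomorphism at every point $(v,\zeta)\in L_p\times(\overline\Delta\setminus\{1\})$. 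Along the $\partial_\zeta$ direction this is immediate from Lempert's classical result that $\varphi'_{p,v}$ is nowhere vanishing on $\overline\Delta$ \cite{Lempert81}. Along the $\partial_v$ directions, it amounts exactly to the transversality of the preferred complex geodesic foliation of $\Omega\setminus\{p\}$, and this is the principal obstacle. My plan to establish it is to follow the strategy of Lempert \cite{Lempert81} and Chang--Hu--Lee \cite{Chang-Hu-Lee88}: linearize, at $(v_0,\zeta_0)$, the Riemann--Hilbert type boundary value problem that characterizes the preferred complex geodesic $\varphi_{p,v}$, and use the uniqueness statements of \cite{Huang-Pisa94,Huang-Wang} together with the strict linear convexity of $\Omega$ to show that any element of $\ker dT_p(v_0,\zeta_0)$ lying in the $\partial_v$-directions must in fact be $0$ modulo $\mathbb C\cdot\varphi'_{p,v_0}(\zeta_0)$. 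Intuitively, a non-trivial such direction would force two distinct preferred complex geodesics of $\Omega$ through $p$ to be infinitesimally tangent at the common interior or boundary point $\varphi_{p,v_0}(\zeta_0)$, contradicting their uniqueness and disjointness away from $p$.

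\medskip

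Granting this invertibility, the inverse function theorem---applied on a slightly larger parameter domain across $\partial\Delta$, which is legitimate because $\widetilde\Phi$ extends $C^{m-3,\alpha-\varepsilon}$-smoothly past $\partial\Delta$ via Lempert's reflection-type extension of complex geodesics for strongly pseudoconvex domains---shows that $\Psi_p$ itself is $C^{m-3,\alpha-\varepsilon}$-smooth on $\overline\Omega\setminus B(p,\delta)$, giving the first assertion. For the joint regularity statement, a compactness argument using the first assertion shows that $d_w\Psi_p^{-1}$ is uniformly invertible on the closed set $(\overline\Omega\times\partial\Omega)\setminus\mathrm{dist}_\delta(\overline\Omega\times\partial\Omega)$, and the inverse function theorem with parameters then delivers the desired joint $C^{m-3,\alpha-\varepsilon}$-regularity of $\Psi$ there.
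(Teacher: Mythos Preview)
Your overall architecture matches the paper's: reduce to the implicit function theorem by showing that the real differential of $(v,\zeta)\mapsto\varphi_{p,v}(\zeta)$ is injective at every $(v,\zeta)\in L_p\times(\overline\Delta\setminus\{1\})$, the regularity of $\Psi_p^{-1}$ being clear from \eqref{defn:inv-BSR}, \eqref{ball-splitting}, and Theorem~\ref{thm:joint-regularity}. But the injectivity step is where essentially all the work lies, and your plan for it has a real gap.

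You propose to ``use the uniqueness statements of \cite{Huang-Pisa94,Huang-Wang}'' together with the intuitive picture of infinitesimal tangency of two preferred geodesics. Those uniqueness results concern the nonlinear objects (the geodesics themselves), and they do not directly control the kernel of the linearized problem; nor does the tangency heuristic by itself survive as a rigorous argument. What the paper actually needs, and proves, is a \emph{two-point} Riemann--Hilbert uniqueness lemma (Theorem~\ref{thm:RH-prob2}): the infinitesimal variation $\psi:=A_0^{-1}\varphi$ vanishes at $\zeta=1$ (because all the geodesics pass through $p$) and its last $(n-1)$-components also vanish at $\zeta=\zeta_0$ (from \eqref{equi-vanishing}), so the linearized problem for $[\psi]$ is homogeneous with \emph{two} prescribed zeros, not one. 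This is strictly stronger than the one-point case (Theorem~\ref{thm:RH-prob1}) used for Theorem~\ref{thm:parameter dependence}, and it is the new ingredient you have not identified. Moreover, to make the linearized Riemann--Hilbert system tractable and to separate off the first component $\psi_1$, the paper first passes to Lempert's canonical coordinates along the geodesic disc (Theorem~\ref{thmA:normalization}), in which $H=I_n$ and $S$ decouples; without this change of frame the computation is substantially harder. Finally, even after Theorem~\ref{thm:RH-prob2} kills $[\psi]$, there remains a one-real-parameter family $\psi_1=ic(1-\zeta)^2$ that is not excluded by the two-point condition; the paper eliminates the constant $c$ by differentiating the \emph{normalization condition on the dual mapping} $\frac{d}{d\theta}\big|_{\theta=0}|\varphi^\ast_{p,v}(e^{i\theta})|=0$, a step your outline does not mention and which is indispensable. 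The appeal to a reflection-type extension across $\partial\Delta$ is unnecessary: the paper works directly on $\overline\Delta$, the differential being a map $\mathbb R^{2n-2}\times\mathbb R^2\to\mathbb R^{2n}$ and Theorem~\ref{thm:joint-regularity} giving enough smoothness up to the boundary.
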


This result strengthens a similar (and deep) result of Chang--Hu--Lee \cite{Chang--Hu--Lee88}, drawing a stronger conclusion under a much weaker condition, and thus may lead to more applications. Relying heavily on Lempert's deformation theory (see \cite{Lempert81}) and the Chern--Moser--Vitushkin normal form theory, Chang--Hu--Lee's  original argument requires $\partial\Omega$ to be sufficiently smooth and is also technically involved (and quite sketchy at some points). In this paper, with the great help of Theorem \ref{thm:joint-regularity} and some new observations (see Theorems \ref{thm:RH-prob2} and \ref{thmA:normalization} below), we can adapt the strategy of Lempert \cite{Lempert86} to the present context. However, we have to overcome some difficulties caused by the bad behavior of the normalization condition \eqref{dual-nor-condition} under generic holomorphic coordinate transformations of $\Omega$ (see Section \ref{sect:proof of thm C} for details, and also compare with the proof of
\cite[Theorem 5.1]{Lempert86}).  It should also be emphasized that the approach in this paper only requires that $\partial\Omega$ be $C^{4,\,\alpha}$-smooth, and the treatment here seems much simpler than that in \cite{Chang--Hu--Lee88}.

We are now in a position to consider the following HCMA equation on $\Omega$ with $p\in\partial\Omega$:
\begin{equation}\label{eq:Bdry-MA}
\begin{cases}
u\in {\rm Psh}(\Omega)\cap
L_{{\rm loc}}^{\infty}(\Omega), \\
(dd^c u)^n=0 \quad \quad  \;\;\; \ \ \,  {\rm on}\,\ \Omega, \\
\lim_{z\to x}u(z)=0 \;\;\; \!\: \quad {\rm for}\,\ x\in\partial\Omega\!\setminus\!\{p\}, \\
u(z)\approx -|z-p|^{-1} \:\:\,\,\,\!\: {\rm as}
\ \ z\to p\, \ {\rm nontangentially},
\end{cases}
\end{equation}
which was first studied by Bracci-Patrizio in \cite{Bracci-MathAnn} where $\Omega$ is further assumed to be strongly convex with $C^m$-smooth boundary for $m\ge 14$ (here ${\rm Psh}(\Omega)$  denotes the set of all plurisubharmonic functions on $\Omega$).
Put
  $$
  P_{\Omega}(z, p)=-\frac{1-|\Psi_p(z)|^2}{|1-\langle \Psi_p(z), \nu_p\rangle|^2},
  $$
which is continuous on $(\overline{\Omega}\times\partial\Omega)\!\setminus\!{\rm diag }\, \partial\Omega$
where
  $$
  {\rm diag }\, \partial\Omega:=\big\{(z, z)\in \mathbb C^{2n}\!: z\in\partial\Omega\big\}.
  $$
By virtue of \cite[Theorem 1.3]{Huang-Wang} (and its proof), we know that for every $p\in\partial\Omega$ the function $P_{\Omega}(\,\cdot\,,\, p)$ solves the above equation. Now as an immediate consequence of Theorem \ref{thm:regularity-BSR} we obtain the following

\begin{thmx}\label{thm:regularity-HCMA}
For every sufficiently small $\delta>0$,
   $$
   P_{\Omega}\in C^{m-3,\,\alpha-\varepsilon}\big((\overline{\Omega}\times\partial\Omega)
   \!\setminus\!{\rm dist}_{\delta}\, (\overline{\Omega}\times\partial\Omega)\big)
   $$
for all $0<\varepsilon<\alpha$.
\end{thmx}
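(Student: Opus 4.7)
My plan is to treat $P_\Omega$ as the composition of the boundary spherical representation $\Psi$ with a smooth rational function, and to deduce the theorem directly from Theorem~\ref{thm:regularity-BSR}, with no further analysis of the Monge-Amp\`ere equation needed.

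Concretely, I would write $P_\Omega(z,p)=F\bigl(\Psi_p(z),\nu_p\bigr)$, where
   $$
   F(w,n):=-\frac{1-|w|^2}{|1-\langle w,n\rangle|^2}.
   $$
This $F$ is real-analytic on the open subset $U:=\{(w,n)\in\mathbb C^n\times\mathbb C^n\!: \langle w,n\rangle\neq 1\}$ of $\mathbb C^{2n}$. Moreover, since $\partial\Omega$ is $C^{m,\,\alpha}$, the outward unit normal map $p\mapsto\nu_p$ is $C^{m-1,\,\alpha}$ on $\partial\Omega$, which is amply more regular than $C^{m-3,\,\alpha-\varepsilon}$.

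The one point that requires verification is that the map $(z,p)\mapsto(\Psi_p(z),\nu_p)$ sends the set $K:=(\overline{\Omega}\times\partial\Omega)\setminus{\rm dist}_\delta\,(\overline{\Omega}\times\partial\Omega)$ into $U$, with a uniform positive distance from $\partial U$. By Cauchy--Schwarz, since $|\Psi_p(z)|\le 1$ and $|\nu_p|=1$, one has $|\langle \Psi_p(z),\nu_p\rangle|\le 1$, with equality to $+1$ forcing $\Psi_p(z)=\nu_p$. As $\Psi_p\!:\overline{\Omega}\to\overline{\mathbb B}^n$ is a bijection with $\Psi_p(p)=\nu_p$, this occurs only at $z=p$, which is excluded on $K$. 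Since $\Psi$ is continuous on $(\overline{\Omega}\times\partial\Omega)\setminus{\rm diag}\,\partial\Omega$ (as recalled in the paragraph preceding the statement of Theorem~\ref{thm:regularity-BSR}) and $K$ is compact, the denominator $|1-\langle\Psi_p(z),\nu_p\rangle|^2$ admits a strictly positive lower bound on $K$.

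Granted this, the standard stability of H\"older classes under composition with smooth functions --- or, equivalently, under multiplication and under inversion of a $C^{k,\,\beta}$ function that is bounded away from zero --- combined with the $C^{m-3,\,\alpha-\varepsilon}$-regularity of $\Psi$ on $K$ supplied by Theorem~\ref{thm:regularity-BSR} yields $P_\Omega\in C^{m-3,\,\alpha-\varepsilon}(K)$ immediately. There is no genuine obstacle here: the deep analytic content has been absorbed into Theorem~\ref{thm:regularity-BSR}, and all that is left is the routine non-vanishing check above together with a purely formal application of the H\"older composition rule.
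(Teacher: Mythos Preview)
Your proposal is correct and is precisely the paper's approach: the paper states Theorem~\ref{thm:regularity-HCMA} as ``an immediate consequence of Theorem~\ref{thm:regularity-BSR}'' without giving a separate proof, and your argument simply makes explicit the routine composition-with-a-smooth-function step together with the non-vanishing of the denominator away from the diagonal.
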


The particular interest in studying equation \eqref{eq:Bdry-MA} lies in the fact that it is a boundary analogue of the following interior one on $\Omega$ with $w\in\Omega$:
\begin{equation}\label{eq:Inter-MA}
  \begin{cases}
   u\in {\rm Psh}(\Omega)\cap
   L_{{\rm loc}}^{\infty}(\Omega\!\setminus\!\{w\}), \\
   (dd^c u)^n=0 \qquad \qquad  \quad \!\quad\ \ \,\, {\rm on}\,\ \Omega \!\setminus\! \{w\}, \\
   \lim_{z\to x}u(z)=0  \qquad \qquad \quad\!  \: {\rm for}\,\ x\in\partial\Omega, \\
   u(z)-\log|z-w|=O(1)\ \ \;  {\rm as} \ \ z\to w,\\
  \end{cases}
\end{equation}
which has been extensively studied in the literature; see \cite{Lempert81, Klimek-Green, Demailly87, BD-counter, GuanMAeqn, Blocki00, Blocki05} among others. The solution to equation \eqref{eq:Inter-MA} can easily be shown to be unique and is now known as the pluricomplex Green function. It shares many properties with the classical Green function in one complex variable and has profound applications, especially when it appears as a weight function in H\"{o}rmander's $L^2$ theory of the $\overline{\partial}$-operator. We note that there has been a great deal of work along this line since the work of Diederich--Ohsawa \cite{Die-Ohsawa95}, and here we refer the reader only to \cite{Blocki-survey, Chen-Survey} for historical background and to \cite{Chen-APDE} for recent developments.

The study of equation \eqref{eq:Bdry-MA} was initiated in \cite{Bracci-MathAnn, Bracci-Trans} and continued very recently in \cite{Huang-Wang, Bracci-Adv}. It turns out that the function $P_{\Omega}(\,\cdot\,,\, p)$ constructed as above bears a strong resemblance to the classical Poisson kernel in real potential theory (see \cite{Bracci-MathAnn, Bracci-Trans}), and has already found important applications in the theory of holomorphic semigroups (see \cite{Bracci-JEMS}) and others. Nevertheless, it is still an open question, very interesting at least from the PDE point of view, whether positive multiples of $P_{\Omega}(\,\cdot\,,\, p)$ are the only solutions to equation \eqref{eq:Bdry-MA}. Under certain analytic and/or geometric conditions, a partial answer to this question can be found in \cite{Bracci-Trans}.

At present, it seems difficult to answer the above question in full generality. To the best of our knowledge, the well-known comparison principle for the complex Monge--Amp\`{e}re operator due to Bedford--Taylor \cite{Bedford82}, as well as its variants in different contexts, remains so far the only available tool for proving the uniqueness of solutions to complex Monge--Amp\`{e}re equations of various types (for more information on this see, for example, the recent monograph \cite{GZ-MAeqnbook} and the references therein). Unfortunately, such a crucial tool fails to work in the present context because of the presence of {\it boundary singularity} in equation \eqref{eq:Bdry-MA}. It seems likely that Theorem \ref{thm:regularity-HCMA} could be useful in seeking an affirmative answer to the above question. We hope to focus on this in future work and expect to find further applications of the results obtained in this paper and its predecessor \cite{Huang-Wang}.

The paper is organized as follows. In Section \ref{sect:preliminaries} we first collect some  necessary background material on complex geodesics of bounded strongly linearly convex domains and then establish some preliminary results that we need in the subsequent sections of the paper. Theorems \ref{thm:parameter dependence} and \ref{thm:joint-regularity} are proved in Sections \ref{sect:proof of thm A} and \ref{sect:proof of thm B}, respectively. Section \ref{sect:proof of thm C} is devoted to the proof of Theorem \ref{thm:regularity-BSR}. In the course of the proof we need a result of Lempert on performing a canonical change of coordinates for strongly linearly convex domains along a given complex geodesic disc. We include a detailed proof of this result in Appendix \ref{supplement} for the reader's convenience.

\smallskip
\noindent {\bf Acknowledgements.} The author is greatly indebted to Xiaojun Huang for many enlightening discussions related to this work and for his constant support and encouragement over the years. The author also wishes to express his gratitude to L\'{a}szl\'{o} Lempert, whose work \cite{Lempert81} on complex geodesics has always been an important source of ideas and motivation for this work. Thanks also go to W{\l}odzimierz Zwonek for his comments on the proof of Theorem \ref{thmA:normalization} and for bringing \cite{KZ16} to the author's attention. Last but not least, the author thanks the anonymous referees for their suggestions and comments that helped improve the exposition of the paper.


\section{Preliminaries}\label{sect:preliminaries}
\subsection{The Kobayashi distance and complex geodesics}\label{Kobayashi-geodesic}
We briefly recall the definitions of the Kobayashi--Royden metric and the Kobayashi distance, as well as some well-known results concerning complex geodesics on bounded strongly linearly convex domains in $\mathbb C^n$. For a complete picture of the subject,  the interested reader may consult \cite{Abatebook, Lempert81, Lempert84, KW13}.

Let $\Delta$ denote the open unit disc in $\mathbb C$. The {\it Kobayashi--Royden metric} $\kappa_{\Omega}$ on a domain $\Omega\subset\mathbb C^n$ is  the pseudo-Finsler metric defined by
$$
\kappa_{\Omega}(z, v):=\inf\big\{\lambda>0\,|\;\exists \;\varphi\in \mathcal{O}(\Delta,\, \Omega)\!: \varphi(0)=z,\, \varphi'(0)=\lambda^{-1}v \big\}, \quad (z, v)\in \Omega\times\mathbb C^n,
$$
where $\mathcal{O}(\Delta,\, \Omega)$ denotes the set of all holomorphic mappings from $\Delta$ to $\Omega$.
The {\it Kobayashi distance} $k_{\Omega}$ on $\Omega$ is then defined to be the integrated form of $\kappa_{\Omega}$, that is
$$
k_{\Omega}(z, w)=\inf_{\gamma\in \Gamma}\int_{0}^{1}\kappa_{\Omega}(\gamma(t), \,\gamma'(t))dt,
\quad (z, w)\in \Omega\times \Omega,
$$
where $\Gamma$ is the set of all $C^1$-smooth curves $\gamma\!:[0, 1] \rightarrow \Omega$ such that $\gamma(0)=z$ and $\gamma(1)=w$. For the open unit disc  $\Delta\subset\mathbb C$, $k_{\Delta}$ coincides with the classical \textit{Poincar\'{e} distance}, i.e.,
$$
k_{\Delta} (\zeta_1, \zeta_2)=\tanh^{-1}\bigg|\frac{\zeta_1-\zeta_2}{1-\zeta_1\overline{\zeta}_2}\bigg|,
\quad (\zeta_1, \zeta_2)\in \Delta\times\Delta.
$$
A holomorphic mapping $\varphi\!:\Delta\to\Omega$ is called a {\it complex geodesic} of $\Omega$ if it is an isometry between $k_{\Delta}$ and $k_{\Omega}$, i.e.,
\begin{equation*}\label{isometry}
k_{\Omega}(\varphi(\zeta_1),\,\varphi(\zeta_2))=k_{\Delta}(\zeta_1, \zeta_2)
\end{equation*}
for all $\zeta_1$, $\zeta_2\in\Delta$.

We proceed with one more definition.

\begin{definition}
A domain $\Omega$ in $\mathbb C^n,\, n>1$, is called  {\it strongly linearly convex} if it has $C^2$-smooth boundary and admits a $C^2$-defining function $\rho\!:\mathbb C^n\to\mathbb R$ whose real Hessian is positive definite on the complex tangent space of $\partial\Omega$, i.e.,
  $$
  \sum_{j,\, k=1}^n\frac{\partial^2 \rho}{\partial z_j\partial\overline
  z_k}(p)v_j\overline{v}_k
  >\bigg|\sum_{j,\, k=1}^n\frac{\partial^2 \rho}{\partial z_j\partial z_k}(p)v_jv_k\bigg|
  $$
for all $p\in\partial\Omega$ and nonzero $v=(v_1,\ldots,v_n)\in T_p^{1,\, 0}\partial\Omega$.
\end{definition}

At first sight, the class of strongly linearly convex domains appears to be rather restrictive. However, as recently discovered, this is not the case. There are bounded strongly linearly convex domains with real-analytic boundary, which are not biholomorphic to convex domains and cannot even be exhausted by biholomorphic images of such domains; see \cite{Pflug-Zwonek}.

We also record the following intrinsic characterization of complex geodesics on strongly linearly convex domains, which is due to Lempert \cite{Lempert84} (see also \cite{KW13} for a very clear exposition) and will be used in this paper in a crucial way.

\begin{theorem}\label{thm:Lempert1}
Let $\Omega\subset\mathbb C^n, \, n>1$, be a bounded strongly linearly convex domain with $C^2$-smooth boundary,  and let $\nu$ be the unit  outward normal vector field of $\partial\Omega$. Then a holomorphic mapping $\varphi\!:\Delta\to\Omega$ is a complex geodesic of $\Omega$ if and only if it satisfies the following conditions:
\begin{enumerate}[leftmargin=2.0pc, parsep=4pt]
\item [{\rm(i)}]
$\varphi\in C^{1/2}(\overline{\Delta},\,\mathbb C^n)$;
\item [{\rm(ii)}]
$\varphi$ is proper, i.e., $\varphi(\partial\Delta)\subset\partial\Omega$;
\item [{\rm(iii)}]
there exists a positive function $\mu\in C^{1/2}(\partial\Delta,\,\mathbb R)$ such that the mapping
$$
\partial\Delta\ni \zeta\mapsto \zeta\mu(\zeta)\overline{\nu\circ\varphi(\zeta)}\in\mathbb C^n
$$
extends holomorphically to $\Delta$; and
\item [{\rm(iv)}]
the winding number of the function
$$
\partial\Delta\ni \zeta\mapsto \langle z-\varphi(\zeta),\, \nu\circ\varphi(\zeta)\rangle
$$
is zero for some {\rm(}and hence all\,{\rm)} $z\in\Omega$.
\end{enumerate}
\end{theorem}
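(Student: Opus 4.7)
The plan is to treat the two implications of the theorem separately, drawing on the machinery of stationary discs and dual mappings developed by Lempert \cite{Lempert81, Lempert84}.

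For necessity, suppose $\varphi$ is a complex geodesic. Conditions (i), (ii), (iv) come relatively cheaply. Properness (ii) is immediate from the blow-up of $k_\Omega$ at $\partial\Omega$: the isometry $k_\Omega(\varphi(\zeta), \varphi(0)) = k_\Delta(\zeta, 0) \to \infty$ as $|\zeta| \to 1^-$ forces $\varphi(\zeta) \to \partial\Omega$. The Hölder-$1/2$ boundary regularity (i) follows from the sharp lower bound $k_\Omega(z, z_0) \geq -\tfrac{1}{2} \log d(z, \partial\Omega) + O(1)$, valid on strongly pseudoconvex domains, combined with the geodesic equation. For (iv), strong linear convexity provides complex support hyperplanes at every boundary point that meet $\overline{\Omega}$ only at the point of tangency; this makes the continuous function $\zeta \mapsto \langle z - \varphi(\zeta), \nu \circ \varphi(\zeta)\rangle$ non-vanishing on $\partial\Delta$ for each $z \in \Omega$, so its winding number is a continuous $\mathbb{Z}$-valued function of $z$ on $\Omega$ and hence constant; evaluating at a convenient reference point pins it at zero.

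The heart of necessity is condition (iii). I plan to derive it from a first-variation argument exploiting the Kobayashi extremality of $\varphi$. Perturbing $\varphi$ by holomorphic discs $h \in H^2(\Delta, \mathbb{C}^n)$ tangent to $\partial\Omega$ along $\varphi(\partial\Delta)$, the extremality forces an Euler--Lagrange identity of the form $\mathrm{Re} \oint_{\partial\Delta} \langle h(\zeta), \nu \circ \varphi(\zeta)\rangle \mu(\zeta)\, |d\zeta| = 0$ for every admissible $h$, where $\mu$ is an a priori unknown non-negative density arising as a Lagrange multiplier; duality (Hahn--Banach, or the brothers Riesz theorem) then produces $\mu > 0$ and the required holomorphic extension of $\zeta \mu(\zeta) \overline{\nu \circ \varphi(\zeta)}$ to $\Delta$. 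The Hölder regularity of $\mu$ is inherited from that of $\nu \circ \varphi$, itself controlled by (i).

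For sufficiency, I assume (i)--(iv) and construct a holomorphic left inverse $F\colon \Omega \to \Delta$ with $F \circ \varphi = \mathrm{id}_\Delta$. Once such an $F$ exists, the Kobayashi distance-decreasing property applied simultaneously to $F$ and $\varphi$ sandwiches the distances and forces $\varphi$ to be a complex geodesic. To define $F$, for $z \in \Omega$ set $g_z(\zeta) = \langle z - \varphi(\zeta), \nu \circ \varphi(\zeta)\rangle$ on $\partial\Delta$. Multiplying by $\zeta \mu(\zeta)$ and using (iii) together with $|\nu \circ \varphi|^2 = 1$ allows one to identify $\zeta \mu(\zeta) g_z(\zeta)$ with the boundary values of a function $G_z$ holomorphic on $\Delta$ (with the appropriate growth at $0$). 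Condition (iv) then yields, via the argument principle, a unique zero $F(z) \in \Delta$ of $G_z$; holomorphicity of $F$ in $z$ follows from the Cauchy integral representation of this zero as a holomorphic functional of $G_z$, and the identity $F(\varphi(\zeta_0)) = \zeta_0$ is verified by direct substitution. The main obstacle will be condition (iii) in the necessity direction together with the Cauchy-integral construction of $F$ in the sufficiency direction: both require delicate boundary-value analysis in Hardy classes compatible with (i) and careful use of strong linear convexity at the minimal $C^2$-level of smoothness. For these two steps I would closely follow the streamlined treatment in \cite{KW13}, adapting as needed.
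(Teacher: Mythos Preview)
The paper does not prove this theorem. It is stated as a known result (``due to Lempert \cite{Lempert84}, see also \cite{KW13} for a very clear exposition'') and simply quoted for later use; no argument is given in the paper itself. So there is no ``paper's own proof'' to compare your proposal against.

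That said, your outline is a faithful sketch of the approach in the cited references: properness from the blow-up of $k_\Omega$, $C^{1/2}$-regularity from distance estimates on strongly pseudoconvex domains, the variational/Euler--Lagrange derivation of the dual mapping for condition (iii), and the construction of a holomorphic left inverse via the argument principle for sufficiency. This is precisely the route taken in \cite{Lempert81, Lempert84} and streamlined in \cite{KW13}, and you explicitly acknowledge you would follow \cite{KW13} for the delicate parts. One small caveat: in your treatment of (iv) you assert that the winding number can be pinned at zero ``by evaluating at a convenient reference point''; in the genuinely strongly linearly convex (non-convex) setting this step is not quite as immediate as in the convex case and is where condition (iv) actually earns its place as an independent hypothesis in the characterization---it is not automatic from (i)--(iii) alone. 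But this is a matter of emphasis rather than a gap.
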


Regarding the last condition, observe that  the (strong) linear convexity of $\Omega$ implies that the function under consideration does not vanish  anywhere, so its winding number is well-defined.

For a complex geodesic $\varphi$ of $\Omega$, the resulting holomorphic mapping as in {\rm(iii)}, which we denote by $\varphi^{\ast}$, can be normalized so that $\langle\varphi', \, \overline{\varphi^{\ast}}\rangle=1$ on $\overline{\Delta}$. The mapping $\varphi^{\ast}$ satisfying such a normalization condition is unique and usually called the {\it dual mapping} of $\varphi$. Lempert also proved that if $\partial\Omega$ is further $C^{m,\,\alpha}$-smooth for some $m\ge 2$ and $\alpha\in(0, 1)$, then both $\varphi$ and $\varphi^{\ast}$ are  $C^{m-1,\,\alpha}$-smooth up to the boundary.

\subsection{Riemann--Hilbert problems}\label{Lem:RH-problem}
Given $m\in\mathbb N$ and $\alpha\in(0, 1)$, we let $\mathcal{O}^{m,\,\alpha}(\partial\Delta,\,\mathbb C^n)$ denote the subspace of $C^{m,\,\alpha}(\partial\Delta,\,\mathbb C^n)$ consisting of mappings that admit holomorphic extension to $\Delta$. Throughout the paper we shall always not distinguish between a mapping $\varphi\in \mathcal{O}^{m,\,\alpha}(\partial\Delta,\,\mathbb C^n)$ and its holomorphic extension to $\Delta$, which allows us to speak of its interior values.

Of crucial importance in the proofs of Theorems \ref{thm:parameter dependence} and \ref{thm:regularity-BSR} are the following two results, which can be thought of as the generalizations of a result of Lempert  \cite{Lempert86} from the $0$-jet (i.e., one-point) case to the $1$-jet and two-point cases, respectively.

\begin{theorem}\label{thm:RH-prob1}
Suppose $m\in\mathbb N^{\ast},\, \alpha\in(0, 1)$, and let $H,\, S\in C^{m,\,\alpha}(\partial\Delta,\,\mathbb C^{n\times n})$ be such that $H(\zeta)$ is Hermitian,  $S(\zeta)$ is symmetric and
\begin{equation}\label{eq:cond-RH}
   v^tH(\zeta)\bar{v}>|v^tS(\zeta)v|
\end{equation}
for all $\zeta\in\partial\Delta$ and $v\in\mathbb C^n\!\setminus\!\{0\}$. Then for every $f\in C^{m,\,\alpha}(\partial\Delta,\,\mathbb C^n)$ and $(\zeta_0,\, z_0,\,v_0)\in \overline{\Delta}\times\mathbb C^n\times\mathbb C^n$, there exists a unique $g\in \mathcal{O}^{m,\,\alpha}(\partial\Delta,\,\mathbb C^n)$ such that $g(\zeta_0)=z_0$, $g'(\zeta_0)=v_0$ and
   $$
   H\overline{(g/{\rm Id_{\partial\Delta}})}+Sg/{\rm Id_{\partial\Delta}}+f\in
   \mathcal{O}^{m,\,\alpha}(\partial\Delta,\,\mathbb C^n).
   $$
\end{theorem}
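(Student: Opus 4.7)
The plan is to handle uniqueness via a Lempert-style contour integration argument exploiting the strict positivity \eqref{eq:cond-RH}, and existence by reducing to Lempert's $0$-jet Riemann-Hilbert theorem \cite{Lempert86} through a polynomial substitution that absorbs the prescribed $1$-jet.

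For uniqueness with $\zeta_0\in\Delta$, suppose $g_1,g_2$ are two solutions; then $g:=g_1-g_2$ satisfies the homogeneous RH condition and has a zero of order at least two at $\zeta_0$. Setting $h:=H\overline{g/\zeta}+Sg/\zeta\in\mathcal{O}^{m,\alpha}$, on $\partial\Delta$ one has the identity
\[
g(\zeta)^t h(\zeta)=\zeta\, g^t H\bar g+\bar\zeta\, g^t S g,
\]
in which $g^t H\bar g$ is real because $H$ is Hermitian. The key move is to consider
\[
\oint_{\partial\Delta}\frac{g(\zeta)^t h(\zeta)}{(\zeta-\zeta_0)^2\,(1-\bar\zeta_0\zeta)^2}\,d\zeta.
\]
The double vanishing of $g$ at $\zeta_0$ makes the integrand holomorphic on $\overline\Delta$, so by Cauchy's theorem it equals zero. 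On the other hand, the identity $(\zeta-\zeta_0)^2(1-\bar\zeta_0\zeta)^2=\zeta^2\,|\zeta-\zeta_0|^4$ on $\partial\Delta$ together with the boundary expression for $g^t h$ rewrite the same integral as $i\int_0^{2\pi}[g^t H\bar g+\bar\zeta^{\,2}\,g^t S g]/|\zeta-\zeta_0|^4\,d\theta=0$. Taking real parts and using $|\bar\zeta^{\,2}\,g^t S g|=|g^t S g|<g^t H\bar g$ wherever $g\neq 0$, the real integrand is pointwise nonnegative and strictly positive off the zero set of $g$, forcing $g\equiv 0$. The boundary case $\zeta_0\in\partial\Delta$ is handled by a parallel argument with a suitable modification of the weight, the boundary regularity and the double vanishing of $g$ at $\zeta_0$ providing the needed integrability.

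For existence, write $g(\zeta)=z_0+(\zeta-\zeta_0)v_0+B(\zeta)\chi(\zeta)$, where $B$ is a scalar polynomial of degree two vanishing to order exactly two at $\zeta_0$ (take $B(\zeta)=(\zeta-\zeta_0)^2$ for interior $\zeta_0$, and a suitable variant involving $1-\bar\zeta_0\zeta$ for boundary $\zeta_0$). The prescribed $1$-jet is then automatic on any $\chi\in\mathcal{O}^{m,\alpha}$. Multiplying the RH equation through by $B$ transforms it into an RH equation of the same form for $\chi$ with rescaled symbols
\[
\tilde H:=|B|^2 H\quad(\text{Hermitian}),\qquad \tilde S:=B^2 S\quad(\text{symmetric}),
\]
both still satisfying the strict positivity, and a modified inhomogeneity. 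Applying Lempert's $0$-jet theorem to this rescaled problem, combined with the injectivity established in the uniqueness step, produces---through a Fredholm index count---the unique $\chi$, and hence $g$.

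The main obstacle will be the dimension bookkeeping in the existence step: the rescaled RH problem obtained after multiplication by $B$ has strictly more solutions than the original, and recovering the genuine solutions of the original equation requires that the holomorphic extension of the RH-expression vanish at $\zeta_0$ to order two, a codimension-$2n$ complex condition which must be reconciled with the freedom left by Lempert's theorem. I plan to handle this by treating the whole system as an $\mathbb{R}$-linear Fredholm operator of index zero, a property I expect to establish via a continuous deformation of $(H,S)$ through matrices preserving the strict positivity to the trivial model $(I,0)$, where the index can be read off directly and then transferred by homotopy invariance.
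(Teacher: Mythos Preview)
Your interior-point arguments are fine (and the paper dispatches that case in one line by an automorphism of $\Delta$ reducing to Lempert's $0$-jet lemma at $\zeta_0=0$); the real content of the theorem is the boundary case $\zeta_0\in\partial\Delta$, where your proposal has two concrete gaps. For uniqueness, when $|\zeta_0|=1$ one has $1-\bar\zeta_0\zeta=-\bar\zeta_0(\zeta-\zeta_0)$, so your weight $(\zeta-\zeta_0)^{-2}(1-\bar\zeta_0\zeta)^{-2}$ is a constant multiple of $(\zeta-\zeta_0)^{-4}$ and the boundary integrand behaves like $|g|^2/|\zeta-\zeta_0|^{4}$. From $g\in C^{1,\alpha}$ with $g(\zeta_0)=g'(\zeta_0)=0$ you only obtain $|g|=O(|\zeta-\zeta_0|^{1+\alpha})$, hence an integrand of order $|\zeta-\zeta_0|^{2\alpha-2}$, which is not integrable for $\alpha\le 1/2$; even when it is, you still owe an argument that the boundary integral of a function merely holomorphic in $\Delta$ vanishes. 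This is precisely the hard point: the paper fills it by first reducing to $H=I_n$ via \cite[Th\'eor\`eme~B]{Lempert81} and then running an $L^p$-bootstrap (using Riesz's theorem on the Hilbert transform applied to a carefully arranged decomposition) to force $(\zeta-\zeta_0)^{-2}g\in L^p(\partial\Delta)$ for \emph{every} $p$, after which a Parseval/$\|S\|<1$ argument---the Fourier-side avatar of your contour identity---finishes.

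For existence at a boundary $\zeta_0$, any scalar $B$ vanishing to order two at $\zeta_0$ satisfies $B(\zeta_0)=0$ as a point of $\partial\Delta$, so your rescaled Hermitian symbol $\tilde H=|B|^2H$ is zero at $\zeta_0$ and condition~\eqref{eq:cond-RH} fails for the rescaled pair $(\tilde H,\tilde S)$; Lempert's $0$-jet theorem therefore does not apply to it. Your Fredholm/homotopy fallback is plausible in spirit, but you have not specified the operator or the spaces, nor verified Fredholmness. The paper sidesteps all of this: it uses Lempert's $0$-jet lemma to build an explicit $4n$-real-parameter family of solutions of the RH condition (parametrized by $g(0)$ and $g'(0)$), and then shows that the $\mathbb R$-linear map sending these parameters to the boundary $1$-jet $(g(\zeta_0),g'(\zeta_0))\in\mathbb C^{2n}$ is injective---which is exactly the uniqueness statement above---hence bijective by a dimension count. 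Existence and uniqueness thus come together, with no degenerate rescaling and no index theory.
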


Here and henceforward we denote by ${\rm Id_{\partial\Delta}}$ the identity map of $\partial\Delta$.

\begin{proof}
We divide the argument into two cases depending on whether $\zeta_0$ lies within  $\Delta$ or not.

\medskip
\noindent  {\bf Case 1:} $\zeta_0\in \Delta$.
\smallskip

If $\zeta_0=0$, the result is an immediate consequence of \cite[Lemma 4.2]{Lempert86}. Otherwise, we take a $\sigma\in {\rm Aut}(\Delta)$ such that $\sigma(0)=\zeta_0$. Then there exists a positive smooth function $\mu\!:\partial\Delta\to \mathbb R$ such that
$\partial\Delta\ni\zeta\mapsto\zeta\mu(\zeta)/\sigma(\zeta)$ extends to a nowhere-vanishing holomorphic function $h$ on $\overline{\Delta}$. In fact, it suffices to take
   $$
   \mu(\zeta):=\frac{\sigma(\zeta)}{\zeta\sigma'(\zeta)},\quad \zeta\in \partial\Delta.
   $$
Now the desired result follows by applying the one in the case of $\zeta_0=0$ to
the pair $(\mu^{-1}H\circ\sigma,\, \mu^{-1}S\circ\sigma)$ instead of $(H,\,S)$.

\medskip
\noindent  {\bf Case 2:} $\zeta_0\in \partial\Delta$.
\smallskip

Let $f$ be as described. According to \cite[Lemma 4.2]{Lempert86}, there exists a unique $g^{\ast}\in \mathcal{O}^{m,\,\alpha}(\partial\Delta,\,\mathbb C^n)$ such that $g^{\ast}(0)=0$ and
   $$
   H\overline{g^{\ast}}+Sg^{\ast}+f\in\mathcal{O}^{m,\,\alpha}(\partial\Delta,\,\mathbb C^n).
   $$
Let $\{e_1,\ldots, e_n\}$ denote the standard basis of $\mathbb C^n$. In the same way, for every  $k\in\{1,\ldots,n\}$  one can also find a unique pair $g^r_k,\,g^i_k\in \mathcal{O}^{m,\,\alpha}(\partial\Delta,\,\mathbb C^n)$ such that
   $$
   g^r_k(0)=e_k,\quad  H\overline{g^r_k}+Sg^r_k\in\mathcal{O}^{m,\,\alpha}(\partial\Delta,\,\mathbb C^n)
   $$
and
   $$
   g^i_k(0)=ie_k,\quad H\overline{g^i_k}+Sg^i_k\in\mathcal{O}^{m,\,\alpha}(\partial\Delta,\,\mathbb C^n).
   $$
Similarly, there exists a unique pair $h^r_k,\,h^i_k\in \mathcal{O}^{m,\,\alpha}(\partial\Delta,\,\mathbb C^n)$ with $h^r_k(0)=h^i_k(0)=0$ such that
\begin{equation*}
\begin{split}
   H\overline{(h^r_k+e_k/{\rm Id_{\partial\Delta}})}+S(h^r_k+e_k/{\rm Id_{\partial\Delta}})
   &=H\overline{h^r_k}+Sh^r_k+{\rm Id_{\partial\Delta}}He_k+Se_k/{\rm Id_{\partial\Delta}}\\
   &\in\mathcal{O}^{m,\,\alpha}(\partial\Delta,\,\mathbb C^n)
\end{split}
\end{equation*}
and
\begin{equation*}
\begin{split}
   H\overline{(h^i_k+ie_k/{\rm Id_{\partial\Delta}})}+S(h^i_k+ie_k/{\rm Id_{\partial\Delta}})
   &=H\overline{h^i_k}+Sh^i_k-i{\rm Id_{\partial\Delta}}He_k+iSe_k/{\rm Id_{\partial\Delta}}\\
   &\in\mathcal{O}^{m,\,\alpha}(\partial\Delta,\,\mathbb C^n).
\end{split}
\end{equation*}
Now a mapping $g:=(g_1, \ldots,g_n)^t
\in\mathcal{O}^{m,\,\alpha}(\partial\Delta,\,\mathbb C^n)$ satisfies
   $$
   H\overline{(g/{\rm Id_{\partial\Delta}})}+Sg/{\rm Id_{\partial\Delta}}+f\in
   \mathcal{O}^{m,\,\alpha}(\partial\Delta,\,\mathbb C^n)
   $$
if and only if the mapping
   $$
   h:=\frac{g-g(0)}{{\rm Id_{\partial\Delta}}}-g^{\ast}-
      \sum_{k=1}^{n}\Big(\big({\rm Re}\,g_k(0)\big)h^r_k+\big({\rm Im}\,g_k(0)\big)h^i_k\Big)
     \in\mathcal{O}^{m,\,\alpha}(\partial\Delta,\,\mathbb C^n)
   $$
satisfies
   $$
   H\overline{h}+Sh\in\mathcal{O}^{m,\,\alpha}(\partial\Delta,\,\mathbb C^n).
   $$
As $h(0)=g'(0)$, it follows from the uniqueness part of \cite[Lemma 4.2]{Lempert86} that
   $$
   h=\sum_{k=1}^{n}\Big(\big({\rm Re}\,g'_k(0)\big)g^r_k+\big({\rm Im}\,g'_k(0)\big)g^i_k\Big).
   $$
Consequently, $g$ takes the form
\begin{equation}\label{eq:solutionformula}
   g=g(0)+{\rm Id_{\partial\Delta}}\bigg\{g^{\ast}+\sum_{k=1}^{n}\Big(\big({\rm Re}\,g'_k(0)\big)g^r_k
     +\big({\rm Im}\,g'_k(0)\big)g^i_k+\big({\rm Re}\,g_k(0)\big)h^r_k+
      \big({\rm Im}\,g_k(0)\big)h^i_k\Big)\bigg\}.
\end{equation}
This implies that the existence and uniqueness of a mapping $g$ with the properties stated in the theorem is equivalent to saying that $g(0)$ and $g'(0)$ are uniquely determined by the prescribed boundary data.

To complete the proof, we now only need to show that the latter is indeed true. In view of  equality \eqref{eq:solutionformula}, it suffices to show that the vectors
$$
\begin{pmatrix}
    g^r_k(\zeta_0)\\
    (g^r_k)'(\zeta_0)\\
\end{pmatrix},
\quad
\begin{pmatrix}
    g^i_k(\zeta_0)\\
    (g^i_k)'(\zeta_0)\\
\end{pmatrix},
\quad
\begin{pmatrix}
    h^r_k(\zeta_0)+\zeta^{-1}_0e_k\\
    (h^r_k)'(\zeta_0)-\zeta^{-2}_0e_k\\
\end{pmatrix},
\quad
\begin{pmatrix}
   h^i_k(\zeta_0)+i\zeta^{-1}_0e_k\\
   (h^i_k)'(\zeta_0)-i\zeta^{-2}_0e_k\\
\end{pmatrix},
$$
$k=1,\ldots, n$, are linearly independent over $\mathbb R$. This amounts to proving that $\varphi=0$ is the only element of $\mathcal{O}^{m,\,\alpha}(\partial\Delta,\,\mathbb C^n)$ that satisfies
\begin{equation*}\label{eq:solution-unique1}
   H\overline{(\varphi/{\rm Id_{\partial\Delta}})}+S\varphi/{\rm Id_{\partial\Delta}}\in
   \mathcal{O}^{m,\,\alpha}(\partial\Delta,\,\mathbb C^n),\quad \varphi(\zeta_0)=\varphi'(\zeta_0)=0.
\end{equation*}
Clearly, it suffices to consider the case when $m=1$. Taking into account the invariance of the last condition under rotations we may assume that $\zeta_0=1$. Also by virtue of \cite[Th\'{e}or\`{e}me B]{Lempert81}, we may further assume that $H=I_n$. Then condition \eqref{eq:cond-RH} means precisely that for every $\zeta\in\partial\Delta$ the operator norm $\|S(\zeta)\|$ of the matrix $S(\zeta)$ is less than one.

Now suppose that $\varphi\in \mathcal{O}^{1,\,\alpha}(\partial\Delta,\,\mathbb C^n)$ satisfies $\varphi(1)=\varphi'(1)=0$ and
\begin{equation}\label{eq:solution-varphi}
   \widetilde{\varphi}:=\overline{\varphi/{\rm Id_{\partial\Delta}}}+S\varphi/{\rm Id_{\partial\Delta}}
   \in\mathcal{O}^{1,\,\alpha}(\partial\Delta,\,\mathbb C^n).
\end{equation}
Set
   $$
   q:=\sup\big\{p\geq 0\!: ({\rm Id_{\partial\Delta}}-1)^{-2}\varphi
      \in L^p(\partial\Delta,\, \mathbb C^n)\big\}.
   $$
Clearly $q\geq(1-\alpha)^{-1}>1$. We claim that $q=\infty$. If this were not the case, there would exist a $Q>q$ such that $(1-\alpha)Q<q$. Then
\begin{align}\label{ineq:L^Q-intergability1}
\begin{split}
 \int_{0}^{2\pi}&\bigg|\frac{(S(e^{i\theta})-S(1))\varphi(e^{i\theta})}{(e^{i\theta}-1)^2}\bigg|^Qd\theta
   \leq \int_{0}^{2\pi}\bigg(\frac{\|S(e^{i\theta})-S(1)\||\varphi(e^{i\theta})|}
        {|e^{i\theta}-1|^2}\bigg)^Qd\theta\\
     &=\int_{0}^{2\pi}\bigg(\frac{\|S(e^{i\theta})-S(1)\|}{|e^{i\theta}-1|^{\alpha}}\bigg)^Q
     \bigg|\frac{\varphi(e^{i\theta})}{e^{i\theta}-1}\bigg|^{\alpha Q}
     \bigg|\frac{\varphi(e^{i\theta})}{(e^{i\theta}-1)^2}\bigg|^{(1-\alpha)Q}d\theta<\infty,
\end{split}
\end{align}
since the first two factors of the integrand in the last integral are bounded and the last one is integrable.
Note that the function ${\rm Id_{\partial\Delta}}/({\rm Id_{\partial\Delta}}-1)^2$ takes values in $\mathbb R\cup \{\infty\}$. By \eqref{eq:solution-varphi} we then have the  decomposition
\begin{equation}\label{eq:L^Q-decomposition1}
   \frac{\,(S-S(1))\varphi\,}{({\rm Id_{\partial\Delta}}-1)^2}
   =\frac{\,{\rm Id_{\partial\Delta}}\widetilde{\varphi}-S(1)\varphi\,}{({\rm Id_{\partial\Delta}}-1)^2}
    -\overline{\, \frac{\varphi}{({\rm Id_{\partial\Delta}}-1)^2}\,}.
\end{equation}
In view of this,  \eqref{ineq:L^Q-intergability1} and a classical result of Riesz that the Hilbert transform maps $L^Q(\partial\Delta, \,\mathbb C)$ into itself, we conclude that both terms on the right-hand side of \eqref{eq:L^Q-decomposition1} belong to $L^Q(\partial\Delta, \,\mathbb C^n)$. This contradicts the fact that $Q>q$, and the claim follows.

Now we see in particular that both $({\rm Id_{\partial\Delta}}-1)^{-2}\varphi$ and
$({\rm Id_{\partial\Delta}}-1)^{-2}S\varphi$ belong to $L^2(\partial\Delta,\, \mathbb C^n)$. Let
  $$
  \sum_{k=0}^{\infty}a_k\zeta^k \quad {\rm and} \quad \sum_{k=-\infty}^{\infty}b_k\zeta^k
  $$
be their Fourier expansions, respectively. By \eqref{eq:solution-varphi} we have $b_{-k}=-\overline{a}_k$ for all $k\geq0$. If $\varphi\neq 0$, an application of the Parseval identity and the fact that $\sup_{\partial\Delta}\|S\|<1$ would give
\begin{align*}\label{ineq:RH-contradiction1}
\begin{split}
    \frac{1}{2\pi}&\int_{0}^{2\pi}\bigg|\frac{\varphi(e^{i\theta})}{(e^{i\theta}-1)^2}\bigg|^2d\theta
    =\sum_{k=0}^{\infty}|a_k|^2\leq\sum_{k=-\infty}^{\infty}|b_k|^2\\
    &=\frac{1}{2\pi}\int_{0}^{2\pi}\bigg|S(e^{i\theta})\frac{\varphi(e^{i\theta})}{(e^{i\theta}-1)^2}\bigg|^2
     d\theta<\frac{1}{2\pi}\int_{0}^{2\pi}\bigg|\frac{\varphi(e^{i\theta})}{(e^{i\theta}-1)^2}\bigg|^2d\theta,
\end{split}
\end{align*}
yielding a contradiction. It follows therefore that $\varphi=0$, as desired. This proves the theorem.
\end{proof}

Much as with the $1$-jet case, we also have the following 2-point case.

\begin{theorem}\label{thm:RH-prob2}
Suppose $m\in\mathbb N,\, \alpha\in(0, 1)$, and let $H,\, S\in C^{m,\,\alpha}(\partial\Delta,\,\mathbb C^{n\times n})$ be such that $H(\zeta)$ is Hermitian, $S(\zeta)$ is symmetric and
  $$
   v^tH(\zeta)\bar{v}>|v^tS(\zeta)v|
  $$
for all $\zeta\in\partial\Delta$ and $v\in\mathbb C^n\!\setminus\!\{0\}$. Then for every $f\in C^{m,\,\alpha}(\partial\Delta,\,\mathbb C^n)$ and $(\zeta_0,\, z_0),\, (\xi_0,\, w_0)\in \overline{\Delta}\times\mathbb C^n$ with $\zeta_0\neq\xi_0$, there exists a unique $g\in \mathcal{O}^{m,\,\alpha}(\partial\Delta,\,\mathbb C^n)$ such that $g(\zeta_0)=z_0$, $g(\xi_0)=w_0$  and
   $$
   H\overline{(g/{\rm Id_{\partial\Delta}})}+Sg/{\rm Id_{\partial\Delta}}+f\in
   \mathcal{O}^{m,\,\alpha}(\partial\Delta,\,\mathbb C^n).
   $$
\end{theorem}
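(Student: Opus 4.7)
The plan is to mirror the proof of Theorem~\ref{thm:RH-prob1} closely, reducing Theorem~\ref{thm:RH-prob2} to a uniqueness statement for the homogeneous problem and then running an adapted Fourier/Parseval argument. For the reduction, Theorem~\ref{thm:RH-prob1} produces for each $v\in\mathbb C^n$ a unique $g_v\in\mathcal{O}^{m,\,\alpha}(\partial\Delta,\,\mathbb C^n)$ with $g_v(\zeta_0)=z_0$ and $g_v'(\zeta_0)=v$ solving the prescribed Riemann-Hilbert condition with data $f$. The assignment $v\mapsto g_v(\xi_0)$ is $\mathbb R$-affine from $\mathbb C^n$ to itself, and the required existence and uniqueness are equivalent to its being a bijection, hence by real-dimension count to the injectivity of its linear part, i.e.\ to the statement that the only $\varphi\in\mathcal{O}^{m,\,\alpha}(\partial\Delta,\,\mathbb C^n)$ satisfying the homogeneous ($f=0$) condition together with $\varphi(\zeta_0)=\varphi(\xi_0)=0$ is $\varphi\equiv 0$. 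As in Theorem~\ref{thm:RH-prob1}, it suffices to treat $m=1$, and by \cite[Th\'eor\`eme B]{Lempert81} we may further assume $H=I_n$, so that $\sup_{\partial\Delta}\|S\|<1$.

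To prove the uniqueness, I set $d(\zeta)=(\zeta-\zeta_0)(\zeta-\xi_0)$ and $u=\varphi/d$, a holomorphic function on $\Delta$. Using the identities $\overline{\mathrm{Id}-\zeta_i}|_{\partial\Delta}=(1-\bar\zeta_i\mathrm{Id})/\mathrm{Id}$ for $\zeta_i\in\Delta$ and $\overline{\mathrm{Id}-\zeta_i}|_{\partial\Delta}=-(\mathrm{Id}-\zeta_i)/(\mathrm{Id}\,\zeta_i)$ for $\zeta_i\in\partial\Delta$, a direct computation transforms the homogeneous equation into the scalar-type identity
\[
\bar u+\widehat S\,u=\widehat v\quad\text{on }\partial\Delta,
\]
in which $\widehat S$ differs from $S$ only by multiplication by unimodular scalars and Blaschke factors (so still $\|\widehat S\|_\infty<1$), and $\widehat v\in\mathcal{O}^{0,\,\alpha}(\overline\Delta)$ satisfies $\widehat v(0)=0$. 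The vanishing $\widehat v(0)=0$, the crucial technical ingredient, comes from an explicit factor $\mathrm{Id}$ appearing in the reduction and evaluated at $\zeta=0$, in exact parallel with Theorem~\ref{thm:RH-prob1}. Assuming $u\in L^2(\partial\Delta,\,\mathbb C^n)$, the Fourier expansion $u=\sum_{k\geq 0}a_k\zeta^k$ and a comparison of coefficients at nonpositive modes force $b_m=-\bar a_{-m}$ for all $m\leq 0$ (where $b_m$ denotes the $m$th Fourier coefficient of $\widehat S\,u$); this yields $\|\widehat S\,u\|_{L^2}^2\geq\|u\|_{L^2}^2$, contradicting $\|\widehat S\,u\|_{L^2}^2<\|u\|_{L^2}^2$ unless $u\equiv 0$.

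It remains to establish $u\in L^2(\partial\Delta,\,\mathbb C^n)$. When at least one of $\zeta_0,\xi_0$ lies in $\Delta$, $u\in C^{0,\,\alpha}(\overline\Delta)\subset L^\infty$ by direct regularity considerations, so Parseval applies immediately. The technically delicate case is $\zeta_0,\xi_0\in\partial\Delta$, where $u$ is a priori only in $L^p$ for $p<1/(1-\alpha)$, its singularity scaling like $|\zeta-\zeta_i|^{\alpha-1}$ near each boundary zero. I intend to bootstrap as in Theorem~\ref{thm:RH-prob1}: introducing the affine-linear $\mathbb C^{n\times n}$-valued interpolant $L(\zeta)$ of $S$ through $\zeta_0,\xi_0$, the H\"older vanishing $\|S(\zeta)-L(\zeta)\|=O(|\zeta-\zeta_i|^\alpha)$ absorbs the singularity of $u$, making $(S-L)u$ bounded on $\partial\Delta$. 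Rewriting the identity as
\[
\zeta_0\xi_0(S-L)u=\zeta_0\xi_0(\mathrm{Id}\,\widetilde\varphi-L\varphi)/d-\bar u,
\]
the left-hand side is bounded, the first term on the right is analytic (the boundary value of a holomorphic function on $\Delta$, its numerator vanishing at $\zeta_0,\xi_0$), and the second term $\bar u$ is antianalytic; the $L^Q$-boundedness of the Szeg\H{o} projection (Riesz's theorem) then yields $\bar u\in L^Q$ for every $1<Q<\infty$, and in particular $u\in L^2$.

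The principal obstacle I anticipate is making the Szeg\H{o}-projection separation in this last step fully rigorous, since a priori neither the analytic term $(\mathrm{Id}\,\widetilde\varphi-L\varphi)/d$ nor the antianalytic term $\bar u$ is individually known to be in $L^Q$ for the desired range of $Q$. The separation must therefore be carried out carefully at the level of Fourier coefficients, using the initial integrability $u\in L^p$ with $p>1$ to guarantee that the Fourier decomposition is meaningful, and then boosting via Riesz's bound. Once this point is settled, the Parseval finish is essentially verbatim from the proof of Theorem~\ref{thm:RH-prob1}.
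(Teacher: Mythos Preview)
Your strategy is essentially the paper's: reduce to the homogeneous problem with $\varphi(\zeta_0)=\varphi(\xi_0)=0$, normalize $H=I_n$, introduce the affine interpolant $L$ of $S$ through $\zeta_0,\xi_0$, bootstrap the integrability of $u=\varphi/d$, and finish via Parseval. Two points need correction.

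First, Theorem~\ref{thm:RH-prob2} is stated for $m\in\mathbb N$, so the base case is $m=0$; your reduction through Theorem~\ref{thm:RH-prob1} requires $m\ge 1$ and therefore does not cover $m=0$. The paper instead uses Lempert's $0$-jet lemma directly to obtain the representation \eqref{eq:solutionformula}, valid for all $m\ge 0$, and then reduces existence and uniqueness to the same homogeneous problem. Relatedly, ``it suffices to treat $m=1$'' should read $m=0$: with $\varphi$ only in $C^{0,\alpha}$, $u=\varphi/d$ genuinely has the $|\zeta-\zeta_i|^{\alpha-1}$ singularity you later describe (whereas for $m\ge 1$ the first-order vanishing of $\varphi$ would already make $u$ bounded and no bootstrap would be needed).

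Second, and more substantively, your claim that the H\"older vanishing of $S-L$ makes $(S-L)u$ \emph{bounded} is false for $\alpha<1/2$: the product only scales like $|\zeta-\zeta_i|^{2\alpha-1}$, which blows up. A single application of Riesz therefore does not suffice. The paper's bootstrap is genuinely iterative: set $q=\sup\{p:u\in L^p\}$, suppose $q<\infty$, choose $Q>q$ with $(1-\alpha)Q<q$, verify that $(S-L)u\in L^Q$ (this is exactly where the $\alpha$-gain is spent, against the $L^{(1-\alpha)Q}$-integrability of $u$; see the estimate \eqref{ineq:L^Q-intergability2}), and then apply Riesz to the decomposition \eqref{eq:L^Q-decomposition2} to force $u\in L^Q$, a contradiction. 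Your final paragraph gestures toward this iteration, but the argument as written does not implement it. A minor related point: your $\widehat v$ is not in $\mathcal O^{0,\alpha}(\overline\Delta)$, since it inherits boundary singularities at $\zeta_0,\xi_0$; what is true, and all that the Parseval step needs, is that it is the $L^2$ boundary trace of a function holomorphic on $\Delta$ vanishing at the origin.
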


\begin{proof}
The argument is essentially the same as that in the proof of Theorem \ref{thm:RH-prob1}. Here, we  only consider the case when $\zeta_0,\,\xi_0\in \partial\Delta$ (the remaining is relatively easy and similar to Case 1 there, so the details are omitted). As before, the mapping $g$ we are looking for must be of the form \eqref{eq:solutionformula}. Thus the problem reduces to proving that $g(0)$ and $g'(0)$ are uniquely determined by the prescribed boundary data. In view of equality \eqref{eq:solutionformula} again, it suffices to show that the  vectors
$$
\begin{pmatrix}
    g^r_k(\zeta_0)\\
    g^r_k(\xi_0)\\
\end{pmatrix},
\quad
\begin{pmatrix}
    g^i_k(\zeta_0)\\
    g^i_k(\xi_0)\\
\end{pmatrix},
\quad
\begin{pmatrix}
    h^r_k(\zeta_0)+\zeta_0^{-1}e_k\\
    h^r_k(\xi_0)+\xi_0^{-1}e_k\\
\end{pmatrix},
\quad
\begin{pmatrix}
   h^i_k(\zeta_0)+i\zeta^{-1}_0e_k\\
   h^i_k(\xi_0)+i\xi^{-1}_0e_k\\
\end{pmatrix},
$$
$k=1,\ldots, n$, are linearly independent over $\mathbb R$. This amounts to proving that $\psi=0$ is the only element of $\mathcal{O}^{m,\,\alpha}(\partial\Delta,\,\mathbb C^n)$ that satisfies
\begin{equation*}\label{eq:solution-unique2}
   H\overline{(\psi/{\rm Id_{\partial\Delta}})}+S\psi/{\rm Id_{\partial\Delta}}\in
   \mathcal{O}^{m,\,\alpha}(\partial\Delta,\,\mathbb C^n),\quad \psi(\zeta_0)=\psi(\xi_0)=0.
\end{equation*}
It suffices to consider the case when $m=0$.

For simplicity, we may assume as before that $H=I_n$. Suppose
$\psi\in \mathcal{O}^{\alpha}(\partial\Delta,\,\mathbb C^n)$ satisfies $\psi(\zeta_0)=\psi(\xi_0)=0$ and
\begin{equation}\label{eq:solution-psi}
   \widetilde{\psi}:=\overline{\psi/{\rm Id_{\partial\Delta}}}+S\psi/{\rm Id_{\partial\Delta}}
   \in\mathcal{O}^{\alpha}(\partial\Delta,\,\mathbb C^n).
\end{equation}
Set
   $$
   q:=\sup\Big\{p\geq 0\!: ({\rm Id_{\partial\Delta}}-\zeta_0)^{-1}
       ({\rm Id_{\partial\Delta}}-\xi_0)^{-1}\psi\in L^p(\partial\Delta,\,\mathbb C^n)\Big\}.
   $$
Clearly $q\geq(1-\alpha)^{-1}>1$. We claim that $q=\infty$. Suppose on the contrary that $q<\infty$ and choose a $Q>q$ such that $(1-\alpha)Q<q$. Then
\begin{align}\label{ineq:L^Q-intergability2}
\begin{split}
     \int_{0}^{2\pi}&\bigg|\bigg(\frac{e^{i\theta}-\xi_0}{\zeta_0-\xi_0}\big(S(e^{i\theta})-S(\zeta_0)\big)
          +\frac{\zeta_0-e^{i\theta}}{\zeta_0-\xi_0}\big(S(e^{i\theta})-S(\xi_0)\big)\bigg)
          \frac{\psi(e^{i\theta})}{(e^{i\theta}-\zeta_0)(e^{i\theta}-\xi_0)}\bigg|^Qd\theta\\
     \leq\, & \frac{1}{|\zeta_0-\xi_0|}\int_{0}^{2\pi}
          \bigg(\frac{\|S(e^{i\theta})-S(\zeta_0)\|}{|e^{i\theta}-\zeta_0|}+
          \frac{\|S(e^{i\theta})-S(\xi_0)\|}{|e^{i\theta}-\xi_0|}
          \bigg)^Q|\psi(e^{i\theta})|^Qd\theta\\
     \leq\, &\frac{2^{Q-1}}{|\zeta_0-\xi_0|}\int_{0}^{2\pi}
          \bigg\{\bigg(\frac{\|S(e^{i\theta})-S(\zeta_0)\|}{|e^{i\theta}-\zeta_0|}\bigg)^Q+
          \bigg(\frac{\|S(e^{i\theta})-S(\xi_0)\|}{|e^{i\theta}-\xi_0|}\bigg)^Q
          \bigg\}|\psi(e^{i\theta})|^Qd\theta\\
     =\, &\frac{2^{Q-1}}{|\zeta_0-\xi_0|}\int_{0}^{2\pi}
          \bigg(\frac{\|S(e^{i\theta})-S(\zeta_0)\|}{|e^{i\theta}-\zeta_0|^{\alpha}}\bigg)^Q|
          \psi(e^{i\theta})|^{\alpha Q}
          \bigg|\frac{\psi(e^{i\theta})}{e^{i\theta}-\zeta_0}\bigg|^{(1-\alpha)Q}d\theta\\
         &+\frac{2^{Q-1}}{|\zeta_0-\xi_0|}\int_{0}^{2\pi}
          \bigg(\frac{\|S(e^{i\theta})-S(\xi_0)\|}{|e^{i\theta}-\xi_0|^{\alpha}}\bigg)^Q|
          \psi(e^{i\theta})|^{\alpha Q}
          \bigg|\frac{\psi(e^{i\theta})}{e^{i\theta}-\xi_0}\bigg|^{(1-\alpha)Q}d\theta\\
      <\, &\infty.
\end{split}
\end{align}
On the other hand, by \eqref{eq:solution-psi} we have the  decomposition
\begin{equation}\label{eq:L^Q-decomposition2}
\begin{split}
   \bigg(&\frac{{\rm Id_{\partial\Delta}}-\xi_0}{\zeta_0-\xi_0}\big(S-S(\zeta_0)\big)
        +\frac{\zeta_0-{\rm Id_{\partial\Delta}}}{\zeta_0-\xi_0}\big(S-S(\xi_0)\big)
   \bigg)\frac{\psi}{({\rm Id_{\partial\Delta}}-\zeta_0)({\rm Id_{\partial\Delta}}-\xi_0)}\\
   =\,&\frac1{({\rm Id_{\partial\Delta}}-\zeta_0)({\rm Id_{\partial\Delta}}-\xi_0)}
     \bigg\{{\rm Id_{\partial\Delta}}\widetilde{\psi}-
     \bigg(\frac{{\rm Id_{\partial\Delta}}-\xi_0}{\zeta_0-\xi_0}S(\zeta_0)
        +\frac{\zeta_0-{\rm Id_{\partial\Delta}}}{\zeta_0-\xi_0}S(\xi_0)\bigg)\psi\bigg\}\\
     &-\overline{\,\frac{\zeta_0\xi_0\psi}{({\rm Id_{\partial\Delta}}-\zeta_0)
      ({\rm Id_{\partial\Delta}}-\xi_0)}\,},
\end{split}
\end{equation}
from which together with \eqref{ineq:L^Q-intergability2} and the aforementioned Riesz theorem we conclude that both terms on the right-hand side of \eqref{eq:L^Q-decomposition2} belong to $L^Q(\partial\Delta, \,\mathbb C^n)$. This contradicts the fact that $Q>q$, and the claim follows.

We now obtain in particular
   $$
   \frac{\psi}{({\rm Id_{\partial\Delta}}-\zeta_0)
      ({\rm Id_{\partial\Delta}}-\xi_0)}\in L^2(\partial\Delta,\,\mathbb C^n).
   $$
Also by \eqref{eq:solution-psi} we have
   $$
   \overline{\,\frac{\zeta_0\xi_0\psi}{({\rm Id_{\partial\Delta}}-\zeta_0)
   ({\rm Id_{\partial\Delta}}-\xi_0)}\,}+S\frac{\psi}{({\rm Id_{\partial\Delta}}-\zeta_0)
   ({\rm Id_{\partial\Delta}}-\xi_0)}
   =\frac{{\rm Id_{\partial\Delta}}\widetilde{\psi}}{({\rm Id_{\partial\Delta}}-\zeta_0)
   ({\rm Id_{\partial\Delta}}-\xi_0)},
   $$
which combined with a similar argument at the end of the proof of Theorem~\ref{thm:RH-prob1} implies  $\psi=0$.  This completes the proof.
\end{proof}

\section{Proof of Theorem \ref{thm:parameter dependence}}\label{sect:proof of thm A}

Let us start with the following

\begin{remark}\label{rem:observations}
We make here a few observations to be used in the proof of the theorem, all of which are very simple but clarify matters greatly.

\begin{enumerate}[leftmargin=1.8pc, parsep=4pt]
\item  [{\rm (i)}]
Let $\Omega\subset\mathbb C^n, \, n>1$, be a domain with $C^{m,\,\alpha}$-smooth boundary, where $m\in\mathbb N^{\ast}$ and $0<\alpha<1$. Then the set $S_{\partial\Omega}$ given in \eqref{defn:fiberbundle} is a $C^{m-1,\,\alpha}$-smooth real submanifold of $\mathbb C^{2n}$ with dimension $4n-3$. In fact, $S_{\partial\Omega}$ is even a fiber bundle over $\partial\Omega$ with fiber
   $$
   \mathbb S^{2n-2}_+:=\big\{(\sqrt{1-|\hat{v}|^2},\, \hat{v})\in\mathbb R\times\mathbb C^{n-1}\!: |\hat{v}|<1\big\}.
   $$
To see this, we denote by $U(n)$ the unitary group of degree $n$ and by $^t$ the transpose of matrices. For every $p_0\in\partial\Omega$ we can find a small neighborhood $U_{\partial\mathbb B^n}\subset \partial\mathbb B^n$ of $\nu_{p_0}$  and a smooth  mapping
   $$
   U_{\partial\mathbb B^n}\to U(n),\quad v\mapsto\gamma_v
   $$
such that $\gamma_v(e_1)=v$ for all $v\in U_{\partial\mathbb B^n}$, where $e_1:=(1,0,\ldots, 0)^t$. Set
   $$
   U_{\partial\Omega}:=\nu^{-1}(U_{\partial\mathbb B^n})\ni p_0\quad \mbox{and}\quad
   U_{S_{\partial\Omega}}:=\big\{(p, v)\in S_{\partial\Omega}\!: \ p\in U_{\partial\Omega}\big\},
   $$
where $\nu\!:\partial\Omega\to\partial\mathbb B^n$ denotes the unit outward normal vector field of $\partial\Omega$. Then it is evident that the mapping
\begin{equation}\label{defn:local-trivialization}
  U_{S_{\partial\Omega}}\to U_{\partial\Omega}\times \mathbb S^{2n-2}_+,\quad
  (p, v)\mapsto (p,\, \gamma^{-1}_{\nu_p}(v))
\end{equation}
gives a locally $C^{m-1,\,\alpha}$-smooth local trivialization of $S_{\partial\Omega}$ over  $U_{\partial\Omega}$.

\item  [{\rm (ii)}]
Let $\Omega$ and $\widetilde{\Omega}$ be bounded strongly linearly convex domains in $\mathbb C^n,\, n>1$, with $C^3$-smooth boundary, and let $f\!: \Omega\to\widetilde{\Omega}$ be a biholomorphic mapping. If $\varphi$ is a complex geodesic of $\Omega$ with $\varphi^{\ast}$ as its dual mapping, then $f\circ\varphi$ is a complex geodesic of $\widetilde{\Omega}$ whose dual mapping $(f\circ\varphi)^{\ast}$ is related to $\varphi^{\ast}$ via the formula
   $$
   \varphi^{\ast}=(f'\circ\varphi)^t(f\circ\varphi)^{\ast}.
   $$
In particular if $U\in U(n)$ and $\varphi_{p,\, v}$ is a preferred complex geodesic of $\Omega$ associated to some $(p, v)\in S_{\partial\Omega}$, then $U\circ \varphi_{p,\, v}$ is the preferred complex geodesic of $\widetilde{\Omega}:=U(\Omega)$ associated to $(Up,\, Uv)\in S_{\partial\widetilde{\Omega}}$. Roughly speaking, unitary transformations preserve preferred complex geodesics.

\item  [{\rm (iii)}]
Let $\Omega$, $\varphi$ and $\varphi^{\ast}$ be as in ${\rm (ii)}$. Suppose further that $\partial\Omega$ is $C^{3,\,\alpha}$-smooth and $\rho\!:\mathbb C^n \to \mathbb R$ is a $C^{3,\,\alpha}$-defining function for $\Omega$ whose (real) gradient is of length two on $\partial\Omega$. Then
$\langle\rho_z\circ\varphi(1),\, \overline{\varphi'(1)}\rangle>0$ and
\begin{align*}
   \begin{split}
       &\left.\frac{d}{d\theta}\right|_{\theta=0}
         \big|\langle\rho_z\circ\varphi(e^{i\theta}),\,\overline{\varphi'(e^{i\theta})}\rangle\big|^2
        =2\langle\rho_z\circ\varphi(1),\, \overline{\varphi'(1)}\rangle
         \left.\frac{d}{d\theta}\right|_{\theta=0}{\rm Re}
         \langle\rho_z\circ\varphi(e^{i\theta}),\, \overline{\varphi'(e^{i\theta})}\rangle\\
       &\qquad=-2\langle\rho_z\circ\varphi(1),\, \overline{\varphi'(1)}\rangle
        {\rm Im}\Big(\langle \rho_z\circ\varphi(1),\,\overline{\varphi''(1)}\rangle+
        \big\langle\rho_{zz}\circ\varphi(1)\varphi'(1),\, \overline{\varphi'(1)}\big\rangle\Big),
   \end{split}
\end{align*}
where
   $$
   \rho_z=\bigg(\frac{\partial\rho}{\partial z_1},\ldots, \frac{\partial\rho}{\partial z_n}\bigg)^t
   \quad {\rm and} \quad
   \rho_{zz}=\bigg(\frac{\partial^2\rho}{\partial z_j\partial z_k}\bigg)_{1\leq j,\, k\leq n}.
   $$
Observe also that
$|\varphi^{\ast}(\zeta)|=1/\langle\zeta\rho_z\circ\varphi(\zeta),\,\overline{\varphi'(\zeta)}\rangle$ for all $\zeta\in\partial\Delta$. The above calculation shows that
\begin{equation}\label{dual-nor-cond}
  \left.\frac{d}{d\theta}\right|_{\theta=0}|\varphi^{\ast}(e^{i\theta})|=0
\end{equation}
if and only if
   $$
   {\rm Im}\Big(\langle\rho_z\circ\varphi(1),\,\overline{\varphi''(1)}\rangle+
   \big\langle\rho_{zz}\circ\varphi(1)\varphi'(1),\, \overline{\varphi'(1)}\big\rangle\Big)=0.
   $$

\item  [{\rm (iv)}]
Note that for every $m\in\mathbb N$ and $0<\alpha<1$, the norms $\|\ \|_{C^{m,\,\alpha}(\overline{\Delta})}$ and $\|\ \|_{C^{m,\,\alpha}(\partial\Delta)}$ are equivalent on the space $\mathcal{O}(\Delta,\,\mathbb C^n)\cap C^{m,\,\alpha}(\overline{\Delta},\,\mathbb C^n)$. This enables us to identify mappings in $\mathcal{O}(\Delta,\,\mathbb C^n)\cap C^{m,\,\alpha}(\overline{\Delta},\,\mathbb C^n)$ with their trace on $\partial\Delta$. Recall that in Subsection~\ref{Lem:RH-problem} we have denoted by
$\mathcal{O}^{m,\,\alpha}(\partial\Delta,\,\mathbb C^n)$ the space consisting of these elements. With this notation, the conclusion of Theorem \ref{thm:parameter dependence} is equivalent to saying that
    $$
    \Phi\!: S_{\partial\Omega}\to\mathcal{O}^{k,\,\varepsilon}(\partial\Delta,\,\mathbb C^n),
            \quad (p, v)\mapsto \varphi_{p,\, v}
    $$
is locally $C^{m-k-1,\,\alpha-\varepsilon}$-smooth for each $k\in\{2,\ldots,m-2\}$ and $\varepsilon\in (0,\alpha)$.
\end{enumerate}
\end{remark}

We are now ready to prove Theorem \ref{thm:parameter dependence}.

\begin{proof}[Proof of Theorem {\rm\ref{thm:parameter dependence}}]
Fix an arbitrary point $(p_0, v_0)\in S_{\partial\Omega}$. We shall prove that $\Phi$ has the desired regularity near $(p_0, v_0)$.

Let $\varphi_0:=\varphi_{p_0,\, v_0}$ denote the  preferred complex geodesic of $\Omega$ associated to $(p_0, v_0)$ and  $\varphi^{\ast}_0$ denote its dual mapping. Then $\varphi_0, \,\varphi^{\ast}_0\in C^{m-1,\,\alpha}(\overline{\Delta})$. With Remark \ref{rem:observations} (ii) in mind, by making a unitary change of coordinates  we may assume that the first two components $\varphi^{\ast}_{0,\, 1},\, \varphi^{\ast}_{0,\, 2}$ of $\varphi^{\ast}_0$ do not vanish simultaneously on $\overline{\Delta}$. Then a standard argument as in Wolff's approach to the corona
theorem (see \cite{Wolff-corona}), together with the well-known regularity properties of the Cauchy--Green transform on $\Delta$ (see, e.g., \cite[Theorem 1.32]{Vekua_book}), yields two functions $\phi_1,\, \phi_2\in \mathcal{O}(\Delta)\cap C^{m-1,\,\alpha}(\overline{\Delta})$ satisfying
   $$
   \phi_1\varphi^{\ast}_{0,\, 1}+\phi_2\varphi^{\ast}_{0,\, 2}=1
   \quad {\rm on}\ \, \overline{\Delta}.
   $$
As in \cite{Lempert86}, we now consider the mapping $A_0\!: \Delta\to \mathbb C^{n\times n}$ given by
\begin{equation*}\label{defn:flatenning}
A_0=\begin{pmatrix}
    \varphi'_{0,\, 1} & -\varphi^{\ast}_{0,\, 2} & -\phi_1\varphi^{\ast}_{0,\, 3} & \cdots & -\phi_1\varphi^{\ast}_{0,\, n} \\
    \varphi'_{0,\, 2} & \varphi^{\ast}_{0,\, 1} & -\phi_2\varphi^{\ast}_{0,\, 3} & \cdots & -\phi_2\varphi^{\ast}_{0,\, n} \\
    \varphi'_{0,\, 3} & 0 & 1 & \cdots & 0 \\
    \vdots & \vdots & \vdots & \ddots &  \vdots \\
    \varphi'_{0,\, n} & 0 & 0 & \cdots & 1 \\
\end{pmatrix}.
\end{equation*}
Then $A_0\in C^{m-2,\,\alpha}(\overline{\Delta},\,\mathbb C^{n\times n})$ and ${\rm det}\, A_0\neq 0$ on $\overline{\Delta}$. Moreover,
\begin{equation}\label{eq:A_0-flatenning}
   A^t_0\varphi^{\ast}_0=e_1:=(1,0,\ldots, 0)^t.
\end{equation}
We also choose a $C^{m,\,\alpha}$-defining function $\rho\!:\mathbb C^n \to \mathbb R$ for $\Omega$ with gradient  of length two on $\partial\Omega$, so that the unit outward normal vector field of $\partial\Omega$ is exactly
   $$
   \rho_{\bar{z}}:=\bigg(\frac{\partial\rho}{\partial\bar{z}_1},\ldots, \frac{\partial\rho}{\partial\bar{z}_n}\bigg)^t.
   $$
Let $k$ and $\varepsilon$ be as in the theorem. Since the winding number is stable under a small perturbation, Theorem \ref{thm:Lempert1}  implies that a mapping $\varphi\in\mathcal{O}^{k,\,\varepsilon}(\partial\Delta,\,\mathbb C^n)$ near $\varphi_0$ is (the boundary value of) a complex geodesic  of $\Omega$ associated to $(p, v)\in S_{\partial\Omega}$ if and only if it satisfies
\begin{equation*}\label{eq:cg-condition}
\left\{
  \begin{aligned}
    &\,\rho\circ\varphi=0, \\
    &\,\exists\, \mu\in C^{k,\,\varepsilon}(\partial\Delta,\,\mathbb R^+),\,
    \mbox{s.t.}\, \, {\rm Id_{\partial\Delta}}\mu\rho_{z}\circ\varphi\in
    \mathcal{O}^{k,\,\varepsilon}(\partial\Delta,\,\mathbb C^n),\\
    &\,\varphi(1)=p, \\
    &\,\varphi'(1)=\langle v, \nu_p\rangle v.
  \end{aligned}
\right.
\end{equation*}
Thanks to \eqref{eq:A_0-flatenning}, for each such $\varphi$ the second of the above conditions can be equivalently reformulated as
   $$
   [A^t_0\,\rho_{z}\circ\varphi]/(A^t_0\,\rho_{z}\circ\varphi)_1\in \mathcal{O}^{k,\,\varepsilon}(\partial\Delta,\,\mathbb C^{n-1}).
   $$
Here and henceforward, the first component of a vector $z\in \mathbb C^n$ is denoted (as before) by $z_1$ or $(z)_1$, whereas the last $(n-1)$ components (considered as a vector in $\mathbb C^{n-1}$) are denoted by $[z]$.

Let $\mathcal{H}$ denote the standard Hilbert transform on $C^{k,\,\varepsilon}(\partial\Delta,\,\mathbb C^{n-1})$ and define a self-mapping $\Pi$ of $C^{k,\,\varepsilon}(\partial\Delta,\,\mathbb C^{n-1})$ by
  $$
  \Pi u=\frac12(u-i\mathcal{H}u)-\frac{1}{4\pi}\int_{0}^{2\pi}u(e^{i\theta})d\theta.
  $$
Then $\Pi$ is continuous and idempotent (i.e., $\Pi^2=\Pi$), and
${\rm Ker}\,\Pi=\mathcal{O}^{k,\,\varepsilon}(\partial\Delta,\,\mathbb C^{n-1})$. This leads us to consider the mapping
\begin{equation*}
\begin{split}
  \Theta\!:\big(&\mathcal{O}^{k,\,\varepsilon}(\partial\Delta,\,\mathbb C^n)\cap B_{k,\,
  \varepsilon} (\varphi_0,\,\varepsilon_0)\big)\times\partial\Omega\times\mathbb B^{n-1}\to\\
  &C^{k,\,\varepsilon}(\partial\Delta,\,\mathbb R)\times
  \Pi\big(C^{k,\,\varepsilon}(\partial\Delta,\,\mathbb C^{n-1})\big)\times\mathbb
  C^n\times\mathbb C^n\times \mathbb R
\end{split}
\end{equation*}
given by
\begin{equation*}\label{defn:Theta}
\begin{split}
  \Theta(\varphi,\, p,\, \hat{v})=\bigg(&\rho\circ \varphi,\,
  \Pi\bigg(\frac{[A^t_0\,\rho_{z}\circ\varphi]}{(A^t_0\,\rho_{z}\circ\varphi)_1}\bigg),\,
  \varphi(1)-p,\, \varphi'(1)-\sqrt{1-|\hat{v}|^2}\,\gamma_{\nu_p}\big(\sqrt{1-|\hat{v}|^2},\,
  \hat{v}^t\big)^t,\, \\
  &{\rm Im}\Big(\langle\rho_{z}\circ\varphi(1),\,\overline{\varphi''(1)}\rangle+
   \big\langle\rho_{zz}\circ\varphi(1)\varphi'(1),\, \overline{\varphi'(1)}\big\rangle\Big)\bigg),
\end{split}
\end{equation*}
where $B_{k,\, \varepsilon} (\varphi_0,\,\varepsilon_0)$ denotes the open ball in
$C^{k,\,\varepsilon}(\partial\Delta,\,\mathbb C^n)$ with center $\varphi_0$ and radius $\varepsilon_0>0$, which is a sufficiently small constant. By virtue of the preceding argument and Remark \ref{rem:observations} (iii), we conclude that a mapping $\varphi\in\mathcal{O}^{k,\,\varepsilon}(\partial\Delta,\,\mathbb C^n)$ near $\varphi_0$ is the preferred complex geodesic of $\Omega$ associated to some $(p, v)\in S_{\partial\Omega}$ near $(p_0, v_0)$ if and only if it solves the equation
\begin{equation}\label{eq:CG-equation}
  \Theta(\varphi,\, p,\, \hat{v})=0.
\end{equation}
Here, with a slight abuse of notation we identify $(p,\, \hat{v})$ with
  $$
 (p, v):=\big(p,\, \gamma_{\nu_p}\big(\sqrt{1-|\hat{v}|^2},\, \hat{v}^t\big)^t\big)
  $$
via the local trivialization \eqref{defn:local-trivialization} of $S_{\partial\Omega}$ over  $U_{\partial\Omega}\ni p_0$. Therefore to investigate the regularity of the mapping  $\Phi$ (in the theorem) near $(p_0, v_0)$, it is sufficient to solve equation \eqref{eq:CG-equation} with
$(p, \hat{v})\in\partial\Omega\times\mathbb B^{n-1}$ given near $(p_0, \hat{v}_0)$.

Now it is routine to verify that $\Theta$ is $C^{m-k-1,\,\alpha-\varepsilon}$-smooth provided $\varepsilon_0>0$ is sufficiently small. An application of the implicit function theorem in Banach spaces would yield the desired regularity of $\Phi$, if we could prove that the partial derivative
\begin{equation*}
L:=\big(d\Theta(\,\cdot\,,\, p_0,\, \hat{v}_0)\big)_{\varphi_0}\!:
\mathcal{O}^{k,\,\varepsilon}(\partial\Delta,\,\mathbb C^n)\to C^{k,\,\varepsilon}(\partial\Delta,\,\mathbb R)\times\Pi\big(C^{k,\,\varepsilon}(\partial\Delta,\,\mathbb C^{n-1})\big)
\times\mathbb C^n\times\mathbb C^n\times \mathbb R
\end{equation*}
is invertible. An easy calculation using \eqref{eq:A_0-flatenning} gives
\begin{align*}
 \begin{split}
   L(\varphi)&=\left.\frac{d}{d\tau}\right|_{\tau=0}\Theta(\varphi_0+\tau\varphi,\, p_0,\, \hat{v}_0)\\
     &=\bigg(2{\rm Re}\langle \varphi,\, \rho_{\bar{z}}\circ\varphi_0\rangle,\,
       \Pi\bigg(\frac{[A^t_0(\rho_{zz}\circ\varphi_0)\varphi+A^t_0(\rho_{z\bar{z}}\circ\varphi_0)\overline{\varphi}]}
       {(A^t_0\,\rho_{z}\circ\varphi_0)_1}\bigg),\,
   \varphi(1),\, \varphi'(1),\, \hat{L}(\varphi)\bigg),
\end{split}
\end{align*}
where $\varphi=(\varphi_1,\ldots,\varphi_n)^t\in \mathcal{O}^{k,\,\varepsilon}(\partial\Delta,\,\mathbb C^n)$ and
\begin{align}\label{eq:partial-der}
\begin{split}
\hat{L}(\varphi)={\rm Im}\bigg(&\langle\rho_z(p_0),\, \overline{\varphi''(1)}\rangle
       +2\big\langle\rho_{zz}(p_0)\varphi'_0(1),\, \overline{\varphi'(1)}\big\rangle\\
      &+\big\langle\rho_{zz}(p_0)\varphi(1)+\rho_{z\bar{z}}(p_0)\overline{\varphi(1)},\,
        \overline{\varphi''_0(1)}\big\rangle\\
      &+\Big\langle\Big(\sum_{j=1}^{n}\frac{\partial \rho_{zz}}{\partial z_j}(p_0)\varphi_j(1)
       +\sum_{j=1}^{n}\frac{\partial \rho_{zz}}{\partial \bar{z}_j}(p_0)\overline{\varphi_j(1)}
           \Big)\varphi'_0(1),\, \overline{\varphi'_0(1)}\Big\rangle\bigg).
\end{split}
\end{align}
Thus the invertibility of $L$ means that for every
   $$
   (r,\, f,\, w,\,  v,\, \lambda)\in C^{k,\,\varepsilon}(\partial\Delta,\,\mathbb R)\times
   \Pi\big(C^{k,\,\varepsilon}(\partial\Delta,\,\mathbb C^{n-1})\big)\times\mathbb C^n
   \times\mathbb C^n\times \mathbb R,
   $$
the following system of equations admits exactly one solution
$\varphi\in \mathcal{O}^{k,\,\varepsilon}(\partial\Delta,\,\mathbb C^n)$:
\begin{equation*}\label{eq:L-invertibility1}
\left\{
  \begin{aligned}
    &\,{\rm Re}\langle \varphi,\, \rho_{\bar{z}}\circ\varphi_0\rangle=r,\\
    &\,\Pi\bigg(\frac{[A^t_0(\rho_{zz}\circ\varphi_0)\varphi+A^t_0(\rho_{z\bar{z}}\circ\varphi_0)\overline{\varphi}]}
    {(A^t_0\,\rho_{z}\circ\varphi_0)_1}\bigg)=f,\\
    &\,\varphi(1)=w,\\
    &\,\varphi'(1)=v,\\
    &\,\hat{L}(\varphi)=\lambda.
  \end{aligned}
\right.
\end{equation*}

To solve this system, we use Theorem \ref{thm:RH-prob1} and the basic properties of the Hilbert transform $\mathcal{H}$. Writing
   $$
   \left.\varphi^{\ast}_0\right|_{\partial\Delta}={\rm Id_{\partial\Delta}}\mu_0\,\rho_{z}\circ\varphi_0
   $$
and using \eqref{eq:A_0-flatenning}, we see that $\varphi$ solves the above system if  and only if $\psi=A^{-1}_0\varphi$ satisfies the following conditions:
\begin{equation}\label{eq:L-invertibility21}
   {\rm Re}\,(\psi_1/{\rm Id_{\partial\Delta}})=\mu_0r,
\end{equation}
\begin{equation}\label{eq:L-invertibility22}
   \Pi\bigg({\rm Id_{\partial\Delta}}\mu_0\big[A^t_0(\rho_{zz}\circ\varphi_0)A_0\psi
            +A^t_0(\rho_{z\bar{z}}\circ\varphi_0)\overline{A_0}\,\overline{\psi}\,\big]\bigg)=f,
\end{equation}
\begin{equation}\label{eq:L-invertibility23}
   \psi(1)=A^{-1}_0(1)w,
\end{equation}
\begin{equation}\label{eq:L-invertibility24}
   \psi'(1)=A^{-1}_0(1)v+(A^{-1}_0)'(1)w,
\end{equation}
and
\begin{equation}\label{eq:L-invertibility25}
   \hat{L}(A_0\psi)=\lambda.
\end{equation}
We first show that $\psi_1$ is uniquely determined by conditions \eqref{eq:L-invertibility21} and \eqref{eq:L-invertibility23}-\eqref{eq:L-invertibility25}. For this we rephrase condition \eqref{eq:L-invertibility21} as
   $$
   {\rm Re}\bigg(\frac{\psi_1-\psi_1(0)}{{\rm Id_{\partial\Delta}}}+
   \overline{\psi_1(0)}\,{\rm Id_{\partial\Delta}}\bigg)=\mu_0r.
   $$
Set
   $$
   f_0:=\mu_0r+i\mathcal{H}(\mu_0r).
   $$
Then by the Privalov theorem $f_0\in \mathcal{O}^{k,\,\varepsilon}(\partial\Delta,\,\mathbb C)$, and
${\rm Im}\,f_0(0)=0$ by the definition of $\mathcal{H}$. It follows that
   $$
   \frac{\psi_1-\psi_1(0)}{{\rm Id_{\partial\Delta}}}+
   \overline{\psi_1(0)}\,{\rm Id_{\partial\Delta}}=f_0+i{\rm Im}\,\psi'_1(0),
   $$
i.e.,
\begin{equation}\label{eq:formula-for-psi_1}
\psi_1=\psi_1(0)+{{\rm Id_{\partial\Delta}}}\big(f_0-\overline{\psi_1(0)}\,{\rm Id_{\partial\Delta}}
       +i{\rm Im}\,\psi'_1(0)\big).
\end{equation}
We then need to show that $\psi_1(0)$ and ${\rm Im}\,\psi'_1(0)$ are uniquely determined by conditions \eqref{eq:L-invertibility23}-\eqref{eq:L-invertibility25}. To this end, we first observe that  \eqref{eq:L-invertibility23} and \eqref{eq:L-invertibility24} uniquely determine $\psi(1)$ and $\psi'(1)$, which in turn determine
${\rm Im}\langle\varphi''(1),\, \nu_{p_0}\rangle$ (via \eqref{eq:L-invertibility25} and \eqref{eq:partial-der}). On the other hand,  \eqref{eq:formula-for-psi_1} gives
\begin{equation*}\label{eq:psi_1-determination}
\left\{
 \begin{aligned}
     &\,2{\rm Im}\,\psi_1(0)+{\rm Im}\,\psi'_1(0)={\rm Im}\big(\psi_1(1)-f_0(1)\big),\\
     &\,{\rm Re}\,\psi_1(0)=\frac12{\rm Re}\big(f_0(1)+f'_0(1)-\psi'_1(1)\big),\\
     &\,{\rm Im}\,\psi_1(0)=\frac12{\rm Re}\big(\psi''_1(1)-f''_0(1)-2f'_0(1)\big),
 \end{aligned}
\right.
\end{equation*}
indicating that $\psi_1(0)$ and ${\rm Im}\,\psi'_1(0)$ are expressible in terms of $\psi_1(1),\, \psi_1'(1)$ and ${\rm Im}\,\psi''_1(1)$ in a unique way. Thus the problem reduces to proving that  ${\rm Im}\,\psi''_1(1)$ is uniquely determined by ${\rm Im}\langle\varphi''(1),\, \nu_{p_0}\rangle$. But this is clearly true, as can be easily seen from \eqref{eq:A_0-flatenning} and the fact that $A_0\in \mathcal{O}^{2,\,\alpha}(\partial\Delta,\,\mathbb C^{n\times n})$ and ${\rm det}\, A_0\neq 0$ on $\overline{\Delta}$.

Up to now we have proved that $\psi_1$ is uniquely determined by conditions \eqref{eq:L-invertibility21} and \eqref{eq:L-invertibility23}-\eqref{eq:L-invertibility25}. We next consider the last $(n-1)$ components $[\psi]=(\psi_2,\ldots,\psi_n)^t$. Set
   $$
   H_0=(h_{ij})_{1\leq i,\,j\leq n}:=\mu_0A^t_0(\rho_{z\bar{z}}\circ\varphi_0)\overline{A_0},
   $$
   $$
   S_0=(s_{ij})_{1\leq i,\,j\leq n}:=({\rm Id_{\partial\Delta}})^2\mu_0A^t_0(\rho_{zz}\circ\varphi_0)A_0,
   $$
and
\begin{equation*}\label{defn:Her-sym}
H:=(h_{ij})_{2\leq i,\,j\leq n},\quad S:=(s_{ij})_{2\leq i,\,j\leq n}.
\end{equation*}
Also we put $g:=[\psi]$. Then condition \eqref{eq:L-invertibility22} amounts to
\begin{equation}\label{eq:RH-problem}
   H\overline{(g/{\rm Id_{\partial\Delta}})}+Sg/{\rm Id_{\partial\Delta}}+\widetilde{f}
   \in\mathcal{O}^{k,\,\varepsilon}(\partial\Delta,\,\mathbb C^{n-1}),
\end{equation}
where
   $$
   \widetilde{f}:=-f+\overline{(\psi_1/{\rm Id_{\partial\Delta}})}(h_{21},\ldots, h_{n1})^t+
                  (\psi_1/{\rm Id_{\partial\Delta}})(s_{21},\ldots, s_{n1})^t
                 \in C^{k,\,\varepsilon}(\partial\Delta,\,\mathbb C^{n-1}).
   $$
Thus it suffices to show that there exists a unique $g\in\mathcal{O}^{k,\,\varepsilon}(\partial\Delta,\,\mathbb C^{n-1})$ with $g(1)=[\psi(1)]$ and $g'(1)=[\psi'(1)]$ such that \eqref{eq:RH-problem} holds. Now since $\Omega$ is strongly linearly convex, it follows that
   $$
   \sum_{i,\, j=1}^n\frac{\partial^2\rho}{\partial z_i\partial\bar {z}_j}\circ\varphi_0(\zeta)v_i\bar{v}_j
   >\bigg|\sum_{i,\, j=1}^n\frac{\partial^2\rho}{\partial z_i\partial z_j}\circ\varphi_0(\zeta)v_iv_j\bigg|
   $$
for all $\zeta\in\partial\Delta$ and non-zero $v\in T_{\varphi_0(\zeta)}^{1,\, 0}\partial\Omega$. This means precisely that
   $$
   \hat{v}^tH(\zeta)\bar{\hat{v}}>|\hat{v}^tS(\zeta)\hat{v}|
   $$
for all $\zeta\in\partial\Delta$ and $\hat{v}\in \mathbb C^{n-1}\!\setminus\!\{0\}$. Theorem \ref{thm:RH-prob1} then guarantees the existence of a unique  $g\in\mathcal{O}^{k,\,\varepsilon}(\partial\Delta,\,\mathbb C^{n-1})$ with the desired property. This proves the invertibility of $L$ and hence the theorem.
\end{proof}

\section{Proof of Theorem \ref{thm:joint-regularity}} \label{sect:proof of thm B}

Recall that the boundary of the domain $\Omega$ in the theorem is assumed to be $C^{m,\,\alpha}$-smooth, where $m\geq 4$ is an integer and $0<\alpha<1$.
Depending on the value of $m$ we argue in two cases.

\medskip
\noindent  {\bf Case 1:} $m=4$.
\smallskip

By Theorem \ref{thm:parameter dependence} (with $k=2$) we know that
   $$
   \Phi\!: S_{\partial\Omega}\to C^{2,\,\varepsilon}(\overline{\Delta},\,\mathbb C^n),
           \quad (p, v)\mapsto \varphi_{p,\, v}
   $$
is locally $C^{1,\,\alpha-\varepsilon}$-smooth for all $0<\varepsilon<\alpha$. Note also that the evaluation mapping
   $$
   e\!: C^{2,\,\varepsilon}(\overline{\Delta},\,\mathbb C^n)\times \overline{\Delta}\to\mathbb C^n,\quad
       (\varphi,\, \zeta) \mapsto \varphi(\zeta)
   $$
is locally $C^{2,\,\varepsilon}$-smooth. It then follows that
$\widetilde{\Phi}\!=e\circ(\Phi\times {\rm Id}_{\overline{\Delta}})$
is necessarily locally $C^{1,\,\alpha-\varepsilon}$-smooth for all $0<\varepsilon<\alpha$.

\medskip
\noindent  {\bf Case 2:} $m\geq 5$.
\smallskip

Let
   $$
   U_{S_{\partial\Omega}}\to U_{\partial\Omega}\times \mathbb S^{2n-2}_+,\quad
   (p, v)\mapsto (p,\, \gamma^{-1}_{\nu_p}(v))
   $$
be a locally $C^{m-1,\,\alpha}$-smooth local trivialization of $S_{\partial\Omega}$ over  an open subset $U_{\partial\Omega}$ of $\partial\Omega$, which is as in \eqref{defn:local-trivialization}. For every
$(p, \hat{v})\in U_{\partial\Omega}\times \mathbb B^{n-1}$ we write
   $$
   \varphi_{p,\,\hat{v}}:=\varphi_{p,\,\gamma_{\nu_p}(v)}\quad
    {\rm with}\quad v:=(\sqrt{1-|\hat{v}|^2},\, \hat{v}^t)^t
   $$
to simplify the notation. Then what we need to prove is no other than that for every $r\in (0, 1)$ the  mapping
\begin{equation}\label{defn:variation-local-version}
   \widetilde{\Phi}_r\!: U_{\partial\Omega}\times r\mathbb B^{n-1}\times\overline{\Delta}\to \mathbb C^n,
   \quad (p, \hat{v}, \zeta)\mapsto \varphi_{p,\,\hat{v}}(\zeta)
\end{equation}
is $C^{m-3,\,\alpha-\varepsilon}$-smooth for all $0<\varepsilon<\alpha$, after shrinking $U_{\partial\Omega}$ if necessary.

First of all, a similar argument as in the proof of \cite[Theorem 3.1]{Huang-Wang} shows
   $$
   \big\{\varphi_{p,\,\hat{v}}(0)\!: (p, \hat{v})\in  U_{\partial\Omega}\times
   r\mathbb B^{n-1}\big\}\subset\subset\Omega,
   $$
whence
   $$
   \sup\left\{\|\varphi_{p,\,\hat{v}}\|_{C^{m-1,\,\min\{\alpha,\, 1/2\}}(\overline{\Delta})}\!:
       (p, \hat{v})\in  U_{\partial\Omega}\times r\mathbb B^{n-1}\right\}<\infty
   $$
for all $r\in(0, 1)$ (see \cite[Proposition 1]{Huang-Illinois94}).
Combining this with Theorem \ref{thm:parameter dependence} then yields
\begin{align}\label{ineq:Holder-estimate1}
\begin{split}
\big|\varphi^{(k)}_{p_1,\,\hat{v}_1}(\zeta_1)-\varphi^{(k)}_{p_2,\,\hat{v}_2}(\zeta_2)\big|
&\leq\big|\varphi^{(k)}_{p_1,\,\hat{v}_1}(\zeta_1)-\varphi^{(k)}_{p_1,\,\hat{v}_1}(\zeta_2)\big|
    +\big|\varphi^{(k)}_{p_1,\,\hat{v}_1}(\zeta_2)-\varphi^{(k)}_{p_2,\,\hat{v}_2}(\zeta_2)\big|\\
&\leq\|\varphi_{p_1,\,\hat{v}_1}\|_{C^{m-1}(\overline{\Delta})}|\zeta_1-\zeta_2|
    +\|\varphi_{p_1,\,\hat{v}_1}-\varphi_{p_2,\,\hat{v}_2}\|_{C^{m-2}(\overline{\Delta})}\\
&\leq {\rm const}_{r,\, \Omega}|(p_1, \hat{v}_1, \zeta_1)-(p_2, \hat{v}_2, \zeta_2)|
\end{split}
\end{align}
for $k=1,\ldots, m-2$, and $(p_1, \hat{v}_1, \zeta_1),\, (p_2, \hat{v}_2, \zeta_2)\in U_{\partial\Omega}\times r\mathbb B^{n-1}\times\overline{\Delta}$.

Before proceeding with further estimates, let us write
   $$
   \Phi(p, \hat{v}):=\Phi\big(p,\, \gamma_{\nu_p}\big(\sqrt{1-|\hat{v}|^2},\, \hat{v}^t\big)^t\big)
   $$
for all $(p, \hat{v})\in U_{\partial\Omega}\times \mathbb B^{n-1}$ and identify $p$ with its local coordinates via a coordinate chart. Consider all multi-indices $(\lambda, \mu)\in \mathbb N^{2n-1}\times\mathbb N^{2n-2}$ with
   $$
   1\leq|\lambda|+|\mu|\leq m-3.
   $$
When $|\lambda|+|\mu|=m-3$, we conclude from Theorem \ref{thm:parameter dependence} (with $k=2$) that
   $$
   \frac{\partial^{m-3}\Phi}{\partial p^{\lambda}\partial \hat{v}^{\mu}}(p, \hat{v})
   \in\mathcal{O}(\Delta,\,\mathbb C^n)\cap C^{2,\,\varepsilon}(\overline{\Delta},\,\mathbb C^n),\quad \forall\,(p, \hat{v})\in U_{\partial\Omega}\times \mathbb B^{n-1}.
   $$
Moreover,
\begin{align}\label{ineq:Holder-estimate2}
 \begin{split}
  &\bigg|\bigg(\frac{\partial^{m-3}\Phi}{\partial p^{\lambda}\partial \hat{v}^{\mu}}
                (p_1, \hat{v}_1)\bigg)(\zeta_1)
               -\bigg(\frac{\partial^{m-3}\Phi}{\partial p^{\lambda}\partial \hat{v}^{\mu}}
                (p_2, \hat{v}_2)\bigg)(\zeta_2)
          \bigg|\\
  &\leq \bigg|\bigg(\frac{\partial^{m-3}\Phi}{\partial p^{\lambda}\partial \hat{v}^{\mu}}
              (p_1, \hat{v}_1)\bigg)(\zeta_1)
       -\bigg(\frac{\partial^{m-3}\Phi}{\partial p^{\lambda}\partial \hat{v}^{\mu}}
              (p_1, \hat{v}_1)\bigg)(\zeta_2)
        \bigg|\\
  &\quad\, +\bigg|\bigg(\frac{\partial^{m-3}\Phi}{\partial p^{\lambda}\partial \hat{v}^{\mu}}
              (p_1, \hat{v}_1)-\frac{\partial^{m-3}\Phi}{\partial p^{\lambda}\partial \hat{v}^{\mu}}
              (p_2, \hat{v}_2)\bigg)(\zeta_2)\bigg|\\
  &\leq \bigg\|\frac{\partial^{m-3}\Phi}{\partial p^{\lambda}\partial \hat{v}^{\mu}}
              (p_1,\hat{v}_1)\bigg\|_{C^{1}(\overline{\Delta})}|\zeta_1-\zeta_2|
           +\bigg\|\frac{\partial^{m-3}\Phi}{\partial p^{\lambda}\partial \hat{v}^{\mu}}
              (p_1, \hat{v}_1)-\frac{\partial^{m-3}\Phi}{\partial p^{\lambda}\partial \hat{v}^{\mu}}
              (p_2, \hat{v}_2)\bigg\|_{C^{0}(\overline{\Delta})}\\
  &\leq {\rm const}_{r,\,\alpha,\,\varepsilon,\,\Omega}
   \big(|\zeta_1-\zeta_2|+|(p_1, \hat{v}_1)-(p_2, \hat{v}_2)|^{\alpha-\varepsilon}\big)
 \end{split}
\end{align}
for all $(p_1, \hat{v}_1, \zeta_1),\, (p_2, \hat{v}_2, \zeta_2)\in U_{\partial\Omega}\times r\mathbb B^{n-1}\times\overline{\Delta}$. We now turn to the case when $1\leq |\lambda|+|\mu|<m-3$. Note that
   $$
   2\leq m-|\lambda|-|\mu|-2\leq m-3.
   $$
Using Theorem \ref{thm:parameter dependence} with $k:=m-|\lambda|-|\mu|-2$ we obtain
   $$
   \frac{\partial^{|\lambda|+|\mu|}\Phi}{\partial p^{\lambda}\partial \hat{v}^{\mu}}(p, \hat{v})
   \in\mathcal{O}(\Delta,\,\mathbb C^n)\cap C^{m-|\lambda|-|\mu|-2,\,\varepsilon}(\overline{\Delta},\,\mathbb C^n), \quad \forall\,(p, \hat{v})\in U_{\partial\Omega}\times \mathbb B^{n-1},
   $$
and
\begin{align}\label{ineq:Holder-estimate3}
 \begin{split}
  \bigg|&\frac{\partial^{j+|\lambda|+|\mu|}\widetilde{\Phi}_r}{\partial \zeta^j\partial p^{\lambda}
                \partial \hat{v}^{\mu}}(p_1, \hat{v}_1, \zeta_1)
                -\frac{\partial^{j+|\lambda|+|\mu|}\widetilde{\Phi}_r}{\partial \zeta^j\partial p^{\lambda}
                \partial \hat{v}^{\mu}}(p_2, \hat{v}_2, \zeta_2)
          \bigg|\\
  &=\bigg|\frac{\partial^j}{\partial \zeta^j}\bigg(\frac{\partial^{|\lambda|+|\mu|}\Phi}
          {\partial p^{\lambda}\partial \hat{v}^{\mu}}(p_1, \hat{v}_1)\bigg)(\zeta_1)
          -\frac{\partial^j}{\partial \zeta^j}\bigg(\frac{\partial^{|\lambda|+|\mu|}\Phi}
          {\partial p^{\lambda}\partial \hat{v}^{\mu}}(p_2, \hat{v}_2)\bigg)(\zeta_2)\bigg|\\
  &\leq\bigg|\frac{\partial^j}{\partial \zeta^j}\bigg(\frac{\partial^{|\lambda|+|\mu|}\Phi}
            {\partial p^{\lambda}\partial \hat{v}^{\mu}}(p_1, \hat{v}_1)\bigg)(\zeta_1)
            -\frac{\partial^j}{\partial \zeta^j}\bigg(\frac{\partial^{|\lambda|+|\mu|}\Phi}
            {\partial p^{\lambda}\partial \hat{v}^{\mu}}(p_1, \hat{v}_1)\bigg)(\zeta_2)\bigg|\\
  &\quad\, +\bigg|\frac{\partial^j}{\partial \zeta^j}
            \bigg(\frac{\partial^{|\lambda|+|\mu|}\Phi}{\partial p^{\lambda}\partial \hat{v}^{\mu}}
                  (p_1, \hat{v}_1)
                 -\frac{\partial^{|\lambda|+|\mu|}\Phi}{\partial p^{\lambda}\partial \hat{v}^{\mu}}
                  (p_2, \hat{v}_2)\bigg)(\zeta_2)
            \bigg|\\
  &\leq\bigg\|\frac{\partial^{|\lambda|+|\mu|}\Phi}{\partial p^{\lambda}\partial \hat{v}^{\mu}}
              (p_1,\hat{v}_1)\bigg\|_{C^{m-|\lambda|-|\mu|-3}(\overline{\Delta})}|\zeta_1-\zeta_2|\\
  &\quad\, +\bigg\|\frac{\partial^{|\lambda|+|\mu|}\Phi}{\partial p^{\lambda}\partial \hat{v}^{\mu}}
              (p_1, \hat{v}_1)-\frac{\partial^{|\lambda|+|\mu|}\Phi}
              {\partial p^{\lambda}\partial \hat{v}^{\mu}}(p_2, \hat{v}_2)
       \bigg\|_{C^{m-|\lambda|-|\mu|-4}(\overline{\Delta})}\\
  &\leq{\rm const}_{r,\, \Omega}|(p_1, \hat{v}_1, \zeta_1)-(p_2, \hat{v}_2, \zeta_2)|
 \end{split}
\end{align}
for all $(p_1, \hat{v}_1, \zeta_1),\, (p_2, \hat{v}_2, \zeta_2)\in U_{\partial\Omega}\times r\mathbb B^{n-1}\times\overline{\Delta}$ and all $j\in \mathbb N$ with
   $$
  j<m-|\lambda|-|\mu|-3.
   $$
Similarly, for $j=m-|\lambda|-|\mu|-3$ an application of  Theorem \ref{thm:parameter dependence} with
   $$
  k:=m-|\lambda|-|\mu|-1
   $$
yields
\begin{align}\label{ineq:Holder-estimate4}
 \begin{split}
   \bigg|&\frac{\partial^{m-3}\widetilde{\Phi}_r}{\partial \zeta^{m-|\lambda|-|\mu|-3}
                 \partial p^{\lambda}\partial \hat{v}^{\mu}}(p_1, \hat{v}_1, \zeta_1)
                -\frac{\partial^{m-3}\widetilde{\Phi}_r}{\partial \zeta^{m-|\lambda|-|\mu|-3}
                 \partial p^{\lambda}\partial \hat{v}^{\mu}}(p_2, \hat{v}_2, \zeta_2)
   \bigg|\\
   &\leq\bigg\|\frac{\partial^{|\lambda|+|\mu|}\Phi}{\partial p^{\lambda}\partial \hat{v}^{\mu}}
              (p_1,\hat{v}_1)\bigg\|_{C^{m-|\lambda|-|\mu|-2}(\overline{\Delta})}|\zeta_1-\zeta_2|\\
   &\quad\, +\bigg\|\frac{\partial^{|\lambda|+|\mu|}\Phi}{\partial p^{\lambda}\partial \hat{v}^{\mu}}
              (p_1, \hat{v}_1)-\frac{\partial^{|\lambda|+|\mu|}\Phi}
              {\partial p^{\lambda}\partial \hat{v}^{\mu}}(p_2, \hat{v}_2)
        \bigg\|_{C^{m-|\lambda|-|\mu|-3}(\overline{\Delta})}\\
  &\leq {\rm const}_{r,\,\alpha,\,\varepsilon,\,\Omega}
   \big(|\zeta_1-\zeta_2|+|(p_1, \hat{v}_1)-(p_2, \hat{v}_2)|^{\alpha-\varepsilon}\big)
 \end{split}
\end{align}
for all $(p_1, \hat{v}_1, \zeta_1),\, (p_2, \hat{v}_2, \zeta_2)\in U_{\partial\Omega}\times r\mathbb B^{n-1}\times\overline{\Delta}$.

Now putting \eqref{ineq:Holder-estimate1}-\eqref{ineq:Holder-estimate4} together, we conclude that for every $r\in (0, 1)$ the mapping $\widetilde{\Phi}_r$ in \eqref{defn:variation-local-version} is indeed  $C^{m-3,\,\alpha-\varepsilon}$-smooth for all $0<\varepsilon<\alpha$, as desired. This completes the proof of the theorem.

\section{Proof of Theorem \ref{thm:regularity-BSR}}\label{sect:proof of thm C}
In light of Theorem \ref{thm:joint-regularity} together with \eqref{defn:inv-BSR} and \eqref{ball-splitting}, it is evident that each $\Psi_p^{-1}$ has the desired regularity.  To establish the same regularity for $\Psi$, by definition it suffices to show that for every sufficiently small $\delta>0$ the mapping
  $$
  (\overline{\Omega}\times\partial\Omega)\!\setminus\!{\rm dist}_{\delta}\, (\overline{\Omega}\times\partial\Omega)
  \to \mathbb C^n\times (\overline{\Delta}\!\setminus\!\{1\}),\quad (z,\, p)\mapsto (v_{z,\,p},\,\zeta_{z,\,p})
  $$
determined in \eqref{defn:BSR} is $C^{m-3,\,\alpha-\varepsilon}$-smooth for all $0<\varepsilon<\alpha$. Observe also that for every
$(z, p)\in\overline{\Omega}\times\partial\Omega$ with $z\neq p$, $(v_{z,\,p},\,\zeta_{z,\,p})$ is the only solution to the equation
  $$
  \widetilde{\Phi}(p, v, \zeta)-z=0.
  $$
It therefore suffices to verify the injectivity of the (real) differential
   $$
   (d\widetilde{\Phi}(p,\,\cdot\,,\,\cdot\,))_{(v,\,\zeta)}\!: \mathbb R^{2n-2}\times\mathbb R^2
   \to \mathbb R^{2n}
   $$
for every $(p, v,\zeta)\in S_{\partial\Omega}\times(\overline{\Delta}\!\setminus\!\{1\})$, for then the desired regularity follows immediately from Theorem \ref{thm:joint-regularity} and  the implicit function theorem.

Fix an arbitrary point $p_0\in \partial\Omega$. Up to a unitary transformation of $\mathbb C^n$ which preserves the strong linear convexity of $\Omega$ and preferred complex geodesics (see Remark \ref{rem:observations} (ii)), we may assume that $\nu_{p_0}=e_1$. Set $\varphi_{\hat{v}}:=\varphi_{p_0,\,v}$ for $v:=(\sqrt{1-|\hat{v}|^2},\, \hat{v}^t)^t$ and consider the mapping
   $$
   \hat{\Phi}\!: \mathbb B^{n-1}\times(\overline{\Delta}\!\setminus\!\{1\})
   \to\overline{\Omega}\!\setminus\!\{p_0\}
   $$
given by $\hat{\Phi}(\hat{v},\, \zeta)=\varphi_{\hat{v}}(\zeta)$. Then by the symmetry of $\mathbb B^{n-1}$,  to show the injectivity of $d\widetilde{\Phi}(p_0,\,\cdot\,,\,\cdot\,)$ we only need to verify
\begin{equation*}\label{eq:trival-kernel}
   {\rm Ker}\,d\hat{\Phi}_{(0,\,\zeta)}=\{0\}, \quad \forall\,\zeta\in\overline{\Delta}\!\setminus\!\{1\}.
\end{equation*}
With the help of (the uniqueness part of) Theorem \ref{thm:RH-prob2}, we shall prove this by first performing for $\Omega$ a holomorphic coordinate transformation as given in Theorem \ref{thmA:normalization} and then carefully analyzing the behavior of the normalization condition \eqref{dual-nor-cond} with respect to such a transformation.

Fix $\zeta_0\in\overline{\Delta}\!\setminus\!\{1\}$ and suppose $(\hat{v},\, \lambda)\in \mathbb C^{n-1}\times\mathbb C$ satisfies
\begin{equation}\label{vanishing}
   d\hat{\Phi}_{(0,\,\zeta_0)}(\hat{v},\,\lambda)=0.
\end{equation}
To show that $\hat{v}=0$ and $\lambda=0$, we note that
$\hat{\Phi}(0,\,\cdot\,)=\varphi_0:=\varphi_{p_0,\, e_1}$, hence
\begin{equation*}\label{eq:horizontal-var}
   d\hat{\Phi}_{(0,\,\zeta_0)}(0, 1)=(d\hat{\Phi}(0,\,\cdot\,))_{\zeta_0}(1)=\varphi'_0(\zeta_0).
\end{equation*}
Setting
\begin{equation*}\label{defn:v-variation}
   \varphi:=\left.\frac{d}{d\tau}\right|_{\tau=0}\varphi_{\tau\hat{v}}\in\mathcal{O}(\Delta,\,\mathbb C^n)
   \cap C^2(\overline{\Delta},\,\mathbb C^n),
\end{equation*}
which makes sense in view of Theorem \ref{thm:parameter dependence}, we obtain
\begin{equation*}\label{eq:vertical-var}
d\hat{\Phi}_{(0,\,\zeta_0)}(\hat{v},\,0)=(d\hat{\Phi}(\,\cdot\,,\zeta_0))_0(\hat{v})=\varphi(\zeta_0).
\end{equation*}
Consequently, \eqref{vanishing} can be rephrased as
\begin{equation}\label{equi-vanishing}
   \varphi(\zeta_0)+\lambda\varphi'_0(\zeta_0)=0.
\end{equation}

To proceed further, we make use of Theorem \ref{thmA:normalization} in Appendix \ref{supplement}. Choose a biholomorphism $F\!:D\to\Omega$ which extends to a $C^2$-smooth diffeomorphism between $\overline{D}$ and $\overline{\Omega}$, and a $C^2$-defining function $\rho$ for $\Omega$ defined on a neighborhood of $\varphi_0(\overline{\Delta})$ in $\mathbb C^n$  such that
\begin{equation}\label{Straightening1}
   F(\,\cdot\,,0)=\varphi_0 \quad {\rm on}\ \, \overline{\Delta},
\end{equation}
\begin{equation}\label{Straightening2}
{\rm Id_{\partial\Delta}}A^t_0\,\rho_{z}\circ\varphi_0=e_1 \quad {\rm on}\ \, \partial\Delta,
\end{equation}
and
\begin{equation}\label{Straightening3}
A^t_0(\rho_{z\bar{z}}\circ\varphi_0)\overline{A_0}=I_n,
   \quad A^t_0(\rho_{zz}\circ\varphi_0)A_0=
   \begin{pmatrix}
    0 & 0 \\
    0 & S_0\\
    \end{pmatrix} \quad {\rm on}\ \, \partial\Delta,
\end{equation}
where $D\subset\mathbb C^n$ is a bounded strongly pseudoconvex domain  with $C^2$-smooth boundary such that
   $$
   \Delta\times\{0\}\subset D\subset\Delta\times\mathbb C^{n-1},
   $$
$A_0:=F'(\,\cdot\,,0)$ on $\overline{\Delta}$ and  $S_0\!:\partial\Delta\to {\rm Sym}(n-1,\,\mathbb C)
\footnote{${\rm Sym}(n, \mathbb C)$ denotes the set of all complex symmetric  $n\times n$ matrices.}$
is $C^2$-smooth with operator norm less than one. Then we have
\begin{equation}\label{eq:geodesic-family}
\left\{
  \begin{aligned}
    &\,\rho\circ\varphi_{\tau\hat{v}}=0,\\
    &\,\Pi\bigg(\frac{[A^t_0\,\rho_{z}\circ\varphi_{\tau\hat{v}}]}
                     {(A^t_0\,\rho_{z}\circ\varphi_{\tau\hat{v}})_1}\bigg)=0,\\
    &\,\varphi_{\tau\hat{v}}(1)=p_0,\\ &\,\varphi_{\tau\hat{v}}'(1)=
        \sqrt{1-\tau^2|\hat{v}|^2}(\sqrt{1-\tau^2|\hat{v}|^2},\,\tau \hat{v}^t)^t,\\
    &\,\left.\frac{d}{d\theta}\right|_{\theta=0}|\varphi^{\ast}_{\tau\hat{v}}(e^{i\theta})|=0
  \end{aligned}
\right.
\end{equation}
for all $|\tau|<<1$. Recall that, as we assumed before,  $\nu_{p_0}=e_1$. Combining this with \eqref{Straightening1} and \eqref{Straightening2}, we conclude that $\rho_z(p_0)=e_1$ and both the first column of $A_0(1)$ and that of its transpose $A_0^t(1)$ are equal to $e_1$. Note also that $\varphi_{\tau\hat{v}}(1)=p_0$ and
  \begin{equation*}
   |\rho_z\circ\varphi_{\tau\hat{v}}|=|\varphi^{\ast}_{\tau\hat{v}}|
   |\langle\rho_z\circ\varphi_{\tau\hat{v}},\, \overline{\varphi'_{\tau\hat{v}}}\rangle| \quad {\rm on}\ \, \partial\Delta.
  \end{equation*}
A straightforward calculation then shows that the last identity in \eqref{eq:geodesic-family} is equivalent to
\begin{equation*}
\begin{split}
\langle \varphi'_{\tau\hat{v}}(1)&,\, e_1\rangle{\rm Im}\big\langle \rho_{zz}(p_0)\varphi'_{\tau\hat{v}}(1)-\rho_{z\bar{z}}(p_0)\overline{\varphi'_{\tau\hat{v}}(1)},\, e_1\big\rangle\\
&={\rm Im}\big(\langle \varphi''_{\tau\hat{v}}(1),\, e_1\rangle+ \langle\rho_{zz}(p_0)\varphi'_{\tau\hat{v}}(1),\, \overline{\varphi'_{\tau\hat{v}}(1)}\rangle\big).
\end{split}
\end{equation*}
Differentiating this identity and those in \eqref{eq:geodesic-family} (except the last one) with respect to $\tau$  and evaluating at $\tau=0$, we obtain
\begin{equation}\label{eq:injectivity1}
   {\rm Re}\,(\psi_1/{\rm Id_{\partial\Delta}})=0,
\end{equation}
\begin{equation}\label{eq:injectivity2}
   \Pi\bigg({\rm Id_{\partial\Delta}}\big[A^t_0(\rho_{zz}\circ\varphi_0)A_0\psi
            +A^t_0(\rho_{z\bar{z}}\circ\varphi_0)\overline{A_0}\,\overline{\psi}\,\big]\bigg)=0,
\end{equation}
\begin{equation}\label{eq:injectivity3}
   \psi(1)=0,
\end{equation}
\begin{equation}\label{eq:injectivity4}
   \psi'(1)=A_0^{-1}(1)\begin{pmatrix}
    \,0\,\\
    \,\hat{v}\,
\end{pmatrix},
\end{equation}
and
\begin{equation}\label{eq:injectivity5}
{\rm Im}\Big(\langle\varphi''(1),\, e_1\rangle+\big\langle \rho_{zz}(p_0)\varphi'(1)+\rho_{z\bar{z}}(p_0)\overline{\varphi'(1)},\, e_1\big\rangle\Big)=0,
\end{equation}
where $\psi:=A^{-1}_0\varphi$.

We are now in a position to show that $\psi\equiv0$. First consider the last $(n-1)$ components $[\psi]$ of $\psi$. In view of \eqref{Straightening3}, \eqref{eq:injectivity2} means more explicitly that
   $$
   \overline{[\psi]/{\rm Id_{\partial\Delta}}}+({\rm Id^2_{\partial\Delta}}S_0)[\psi]/{\rm Id_{\partial\Delta}}\in\mathcal{O}^2(\partial\Delta,\,\mathbb C^{n-1}).
   $$
Clearly $[\psi](1)=0$. Observing also that $\varphi'_0$ is the first column of $A_0=F'(\,\cdot\,,0)$ (see \eqref{Straightening1}), we see that \eqref{equi-vanishing} is equivalent to
 \begin{equation}\label{equi-vanishing1}
   \psi(\zeta_0)=-\lambda e_1,
 \end{equation}
and hence $[\psi](\zeta_0)=0$ as well. Then the uniqueness part of Theorem \ref{thm:RH-prob2} implies $[\psi]\equiv 0$, as desired.  We next show that the same is true for the first component $\psi_1$ of $\psi$. Observe that  \eqref{eq:injectivity1}, \eqref{eq:injectivity3} and \eqref{eq:injectivity4} together imply
  \begin{equation}\label{first-comp}
     \psi_1=ic\,(1-{\rm Id}_{\overline{\Delta}})^2
  \end{equation}
with $c$ being a real constant (depending on $\hat{v}$). Indeed, \eqref{eq:injectivity1} is equivalent to
\begin{equation*}\label{eq:injectivity1'}
  {\rm Re}\big(\psi_1/(1-{\rm Id}_{\overline{\Delta}})^2\big)=0
  \quad {\rm on}\ \, \partial\Delta\!\setminus\!\{1\}.
\end{equation*}
Since also $\psi_1(1)=\psi'_1(1)=0$, the function $\psi_1/(1-{\rm Id}_{\overline{\Delta}})^2$ belongs to the Hardy space $H^1(\Delta)$ and then \eqref{first-comp} follows immediately. It remains to show that the constant in \eqref{first-comp} is actually zero.  For this we first deduce from \eqref{Straightening1} and \eqref{Straightening2} that
   \begin{equation}\label{eq:dual-formula}
     \left.\varphi^{\ast}_0\right|_{\partial\Delta}={\rm Id_{\partial\Delta}}\,\rho_{z}\circ\varphi_0,
   \end{equation}
and hence $A_0^t\varphi^{\ast}_0=e_1$ (i.e., $\varphi^{\ast}_0$ is precisely the first column of $(A^t_0)^{-1}$). Then, by differentiating the identity $\varphi=A_0\psi$ twice and evaluating at $1$ we get
  $$
  \varphi''(1)=A_0(1)\big(\psi''(1)-2(A_0^{-1})'(1)\varphi'(1)\big)
  $$
and consequently
  $$
  \varphi_1''(1)=
  \big\langle\psi''(1)-2(A_0^{-1})'(1)\varphi'(1),\, e_1\big\rangle
  =\psi''_1(1)-2\langle (\varphi^{\ast}_0)'(1),\, \overline{\varphi'(1)}\rangle.
  $$
On the other hand, by \eqref{eq:dual-formula} we have
   $$
   (\varphi^{\ast}_0)'(1)=-i\left.\frac{d}{d\theta}\right|_{\theta=0}\varphi^{\ast}_0(e^{i\theta})
   =e_1+\rho_{zz}(p_0)e_1-\rho_{z\bar{z}}(p_0)e_1.
   $$
Combining the last two equalities with \eqref{eq:injectivity5} and \eqref{first-comp}, we arrive at
\begin{equation*}\label{eq:second-der1}
  c=\frac12{\rm Im}\big\langle \rho_{zz}(p_0)e_1-\rho_{z\bar{z}}(p_0)e_1,\,
   \overline{\varphi'(1)}\big\rangle=-\frac12{\rm Im}\langle e_1, (0, \hat{v}^t)^t
   \rangle=0.
\end{equation*}
Here the second equality follows from \eqref{Straightening3}, \eqref{eq:injectivity4} and the aforementioned fact that both the first column of $A_0(1)$ and that of its transpose $A_0^t(1)$ are equal to $e_1$.

Now we have $\psi\equiv0$. Together with \eqref{eq:injectivity4} and \eqref{equi-vanishing1}, this in turn implies $\hat{v}=0$ and $\lambda=0$, thereby completing the proof.

\appendix
\section{Canonical coordinates and defining functions along a geodesic disc}\label{supplement}
\setcounter{equation}{0}
\renewcommand\theequation{A.\arabic{equation}}

In this appendix we state and prove the following result, which is essentially due to Lempert and plays a crucial role in the proof of Theorem \ref{thm:regularity-BSR}.

\begin{theorem}[cf. \cite{Lempert81, Lempert84}]\label{thmA:normalization}
Suppose $\Omega\subset\mathbb C^n,\,n>1$, is a bounded strongly linearly convex domain with $C^{m,\,\alpha}$-smooth boundary, where $m\geq 4$ and $0<\alpha<1$. Given  a complex geodesic $\varphi$ of $\Omega$,  there is a bounded strongly pseudoconvex domain $D\subset\mathbb C^n$ with $C^{m-2,\,2\alpha}$- {\rm(}resp. $C^{m-1}$-{\rm)} smooth boundary and a biholomorphism $F\!: D\to \Omega$ which extends to a $C^{m-2,\,2\alpha}$- {\rm(}resp. $C^{m-1}$-{\rm)} smooth diffeomorphism between $\overline{D}$ and $\overline{\Omega}$ when $0<\alpha\leq1 /2$ {\rm(}resp. $1/2<\alpha<1${\rm)}, such that the following hold:
\begin{enumerate}[leftmargin=2.0pc, parsep=4pt]
\item  [{\rm (i)}]
    $\Delta\times\{0\}\subset D\subset\Delta\times\mathbb C^{n-1}$;
\item  [{\rm (ii)}]
    $F(\,\cdot\,,0)=\varphi$ on $\Delta$; and
\item  [{\rm (iii)}]
    $\partial D$ admits a $C^{m-2,\,\alpha}$-defining function $\rho\!:\overline{\Delta}\times
    \mathbb C^{n-1}\to \mathbb R$ satisfying
      \begin{equation}\label{asy-expansion}
       \rho(z)=-1+|z|^2+{\rm Re}\bigg(\sum_{j,\, k=2}^n\frac{\partial^2 \rho}{\partial z_j
               \partial z_k}(z_1, 0)z_jz_k\bigg)+O\big({\rm dist}^{2+\alpha}(z, \partial\Delta
               \times\{0\})\big)
      \end{equation}
    as $\overline{D}\ni z\to\partial\Delta\times\{0\}$, where
      \begin{equation}\label{partial-regularity}
        \frac{\partial^2 \rho}{\partial z_j\partial z_k}(\,\cdot\,,0)\in C^{m-2,\,\alpha}
        (\overline{\Delta}),\quad j,\, k=2, \ldots, n
      \end{equation}
    satisfy
      \begin{equation}\label{partial-convexity}
        \bigg|\sum_{j,\, k=2}^n\frac{\partial^2 \rho}{\partial z_j\partial z_k}(z_1, 0)v_jv_k\bigg|<|v|^2
      \end{equation}
    for all $z_1\in\partial\Delta$ and $v=(v_2,\ldots,v_n)\in \mathbb C^{n-1}\!\setminus\!\{0\}$. In
    particular, $\partial D$ is strongly linearly convex near $\partial\Delta\times\{0\}$.
\end{enumerate}
\end{theorem}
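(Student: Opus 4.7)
Following Lempert's approach, the plan is to construct $F$ as a holomorphic correction of a first-order ``flattening'' of $\Omega$ along $\varphi$. Using $\varphi$ and its dual mapping $\varphi^{\ast}$, build a holomorphic matrix-valued frame $A\in C^{m-2,\alpha}(\overline{\Delta},\mathbb{C}^{n\times n})$ with nowhere vanishing determinant, first column equal to $\varphi'$, and transpose satisfying $A^{t}\varphi^{\ast}=e_{1}$; this is exactly the construction of $A_{0}$ used in the proof of Theorem \ref{thm:parameter dependence}, relying on a Bezout-type splitting of $\varphi^{\ast}$ via Wolff's approach to the corona theorem and regularity of the Cauchy--Green transform. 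Define the preliminary flattening
$$
F_{0}(\zeta,w):=\varphi(\zeta)+A(\zeta)(0,w^{t})^{t},
$$
which is holomorphic on $\Delta\times\mathbb{C}^{n-1}$, of class $C^{m-2,\alpha}$ up to $\partial\Delta$, and a local biholomorphism on a neighborhood of $\overline{\Delta}\times\{0\}$ because $A$ is pointwise invertible there. By construction $F_{0}(\cdot,0)=\varphi$, yielding (ii).

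The normalization (iii) is obtained by further composing $F_{0}$ with a fibered holomorphic change of coordinates of the form $(\zeta,w)\mapsto(\zeta,M(\zeta)w+Q(\zeta)(w\otimes w))$ designed to kill unwanted Taylor coefficients of $\tilde{\rho}_{0}:=\rho_{\Omega}\circ F_{0}$ along $\partial\Delta\times\{0\}$. Choosing a $C^{m,\alpha}$-defining function $\rho_{\Omega}$ for $\Omega$ with $|(\rho_{\Omega})_{z}|=1$ on $\partial\Omega$, the identities $A^{t}\varphi^{\ast}=e_{1}$ and $\langle\varphi',\overline{\varphi^{\ast}}\rangle=1$ force $(\tilde{\rho}_{0})_{z_{1}}(\zeta,0)=\bar{\zeta}$ and $(\tilde{\rho}_{0})_{z_{j}}(\zeta,0)=0$ for $j\ge 2$ on $\partial\Delta$, so the first-order behavior already matches the ball model $-1+|z|^{2}$. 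Normalizing the Levi form $A^{t}(\rho_{\Omega})_{z\bar{z}}\overline{A}$ to the identity along $\partial\Delta\times\{0\}$ determines $M$, and removing the cross-terms involving $\zeta\bar{w}_{j}$ and $w_{j}\bar{w}_{k}$ at the next order determines $Q$; each of these steps reduces to a Riemann--Hilbert problem of exactly the type solved by Theorem \ref{thm:RH-prob1}, with the required positivity \eqref{eq:cond-RH} provided by strong linear convexity of $\Omega$. In the resulting coordinates, strong linear convexity along $\partial\Delta\times\{0\}$ translates word for word into \eqref{partial-convexity}.

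The main obstacle, which I expect to be the hardest step, is to upgrade $F:=F_{0}\circ(\text{normalizing coordinate change})$ from a local biholomorphism near $\overline{\Delta}\times\{0\}$ to a \emph{global} biholomorphism from a strongly pseudoconvex domain $D\subset\Delta\times\mathbb{C}^{n-1}$ onto all of $\Omega$. I would take $D$ to be the connected component of $F^{-1}(\Omega)$ containing $\Delta\times\{0\}$ and prove global injectivity via the uniqueness theory for preferred complex geodesics: each transverse direction $v\in\mathbb{C}^{n-1}\setminus\{0\}$ at $\varphi(\zeta)$ determines, by Huang and Huang--Wang, a unique complex geodesic meeting $\varphi(\Delta)$ transversally at $\varphi(\zeta)$, and the resulting foliation of $\Omega\setminus\varphi(\Delta)$ must, by the construction of $A$, coincide leaf-by-leaf with the fibers $F(\{\zeta\}\times\cdot)$ of our candidate map near $\overline{\Delta}\times\{0\}$. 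The delicate point is to rule out monodromy of the foliation and to verify that every point of $\Omega$ is reached by exactly one such leaf with $\zeta$-coordinate in $\Delta$; this I would do by a continuation argument in $\zeta$ and $w$ based on the parameter-dependence Theorems \ref{thm:parameter dependence}--\ref{thm:joint-regularity}, together with the non-tangential intersection property of transverse Kobayashi geodesics in strongly linearly convex domains.

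Finally, once $F$ is globally biholomorphic, the regularity of $\partial D=F^{-1}(\partial\Omega)$ and of $\rho:=\rho_{\Omega}\circ F$ is inherited from the $C^{m,\alpha}$-regularity of $\partial\Omega$ together with the $C^{m-2,\alpha}$-regularity of $F$ (a direct consequence of Lempert's $C^{m-1,\alpha}$ boundary regularity for $\varphi$ and $\varphi^{\ast}$); the dichotomy $C^{m-2,2\alpha}$ versus $C^{m-1}$ reflects the standard sharpening of H\"older estimates for the Cauchy transform when $\alpha>1/2$. The asymptotic expansion \eqref{asy-expansion} and the regularity \eqref{partial-regularity} then drop out by Taylor-expanding $\rho$ at points of $\partial\Delta\times\{0\}$ and reading off the normalizations performed in the second step.
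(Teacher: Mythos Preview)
Your overall strategy---flatten along $\varphi$ via a frame built from $\varphi^{\ast}$, then normalize the Levi form by a holomorphic matrix $M(\zeta)$ acting on the transverse variables---is indeed Lempert's, and matches the paper's construction of $G$ and $H$. But three points deserve correction.

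\textbf{Global injectivity.} You flag this as the hardest step and propose a continuation/foliation argument using uniqueness of preferred geodesics and Theorems~\ref{thm:parameter dependence}--\ref{thm:joint-regularity}. This is much more than is needed. The paper's $G$ is globally injective on $G^{-1}(\overline{\Omega})$ by an elementary argument (essentially that distinct points of $\overline{\Omega}$ on the same complex tangent hyperplane to $\varphi(\partial\Delta)$ cannot collide; see \cite[Remark~2.3]{Huang-Wang} and \cite[pp.~190--191]{KW13}), and the subsequent fibered linear map $(z_{1},z')\mapsto(z_{1},H(z_{1})z')$ is trivially a biholomorphism of $\Delta\times\mathbb{C}^{n-1}$. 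No geodesic foliation or monodromy analysis is required.

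\textbf{The normalization step.} The paper does \emph{not} use a quadratic correction $Q(\zeta)(w\otimes w)$ in the coordinates. Only the linear change $w\mapsto H(\zeta)w$ is performed, where $H$ solves the matrix Riemann--Hilbert problem $H^{t}R\overline{H}=I_{n-1}$ from \cite[Th\'eor\`eme~B]{Lempert81} (not Theorem~\ref{thm:RH-prob1}). The remaining unwanted second-order coefficients---specifically $\partial^{2}r/\partial w_{1}\partial\overline{w}_{1}$ and $\partial^{2}r/\partial w_{1}\partial\overline{w}_{j}$ along $\partial\Delta\times\{0\}$---are eliminated not by changing coordinates but by multiplying the defining function by a suitable real scalar factor $\lambda(z_{1},z')$. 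Your quadratic $Q$ would spoil global injectivity of the coordinate change, and in any case a purely $w$-quadratic term cannot affect the mixed $z_{1}\overline{w}_{j}$ coefficients you claim it removes.

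\textbf{The regularity dichotomy.} This is the genuine gap. The improvement from $C^{m-2,\alpha}$ to $C^{m-2,2\alpha}$ (resp.\ $C^{m-1}$) is \emph{not} a ``standard sharpening of H\"older estimates for the Cauchy transform''; no such sharpening exists. The mechanism is geometric: one first proves the shape estimate $D\subset\{|z_{1}|^{2}+\varepsilon|z'|^{2}<1\}$, so that $|z'|\lesssim(1-|z_{1}|)^{1/2}$ on $\overline{D}$. Combining this with the Hardy--Littlewood bound $|H^{(m-1)}(z_{1})|\lesssim(1-|z_{1}|)^{\alpha-1}$ gives $|H^{(m-1)}(z_{1})z'|\lesssim(1-|z_{1}|)^{\alpha-1/2}\lesssim|z'|^{2\alpha-1}$ on $\overline{D}$, from which the claimed regularity of $(z_{1},z')\mapsto H(z_{1})z'$ on $\overline{D}$ follows. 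Without this parabolic-thinness argument your $F$ is only $C^{m-2,\alpha}$ up to $\partial D$, which does not yield the stated conclusion.
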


\begin{proof}
Lempert stated the theorem in a slightly different way. Here we prove the theorem by adapting his arguments. For the sake of completeness, we will be very explicit in the presentation that follows.

Write $\varphi=(\varphi_1,\ldots,\varphi_n)$ and similarly for its dual mapping $\varphi^{\ast}$. As in \cite{Lempert81, Lempert84}, by making a linear change of coordinates we may assume that
the first two components $\varphi^{\ast}_1,\, \varphi^{\ast}_2$ of $\varphi^{\ast}$ do not vanish simultaneously on $\overline{\Delta}$. Then there exist two functions $\psi_1,\, \psi_2\in \mathcal{O}(\Delta)\cap C^{m-1,\,\alpha}(\overline{\Delta})$ such that
   $$
   \psi_1\varphi^{\ast}_1+\psi_2\varphi^{\ast}_2=1\quad {\rm on}\ \, \overline{\Delta}.
   $$
Next, consider the mapping $G=(G_1,\ldots, G_n)\!: \overline{\Delta}\times\mathbb C^{n-1}\to\mathbb C^n$ given by
\begin{align*}
\begin{split}
&\xi_1=G_1(w):=\varphi_1(w_1)-w_2\varphi^{\ast}_2(w_1)-\psi_1(w_1)\sum_{k=3}^n w_k\varphi^{\ast}_k(w_1),\\
&\xi_2=G_2(w):=\varphi_2(w_1)+w_2\varphi^{\ast}_1(w_1)-\psi_2(w_1)\sum_{k=3}^n w_k\varphi^{\ast}_k(w_1),\\
&\xi_j=G_j(w):=\varphi_j(w_1)+w_j,\quad j=3,\ldots,n.
\end{split}
\end{align*}
Obviously $G$ is holomorphic on $\Delta\times\mathbb C^{n-1}$.  Also using \cite[Remark 2.3]{Huang-Wang}  one can show that $\overline{\Omega}\subset G(\overline{\Delta}\times\mathbb C^{n-1})$, and $G$ is injective on $G^{-1}(\overline{\Omega})$ with Jacobian invertible on $\overline{\Delta}\times\mathbb C^{n-1}$ (cf. \cite[pp. 190--191]{KW13}). In other words, $\left. G\right|_{G^{-1}(\Omega)}\!:G^{-1}(\Omega)\to\Omega$ is a biholomorphism and $\left. G\right|_{G^{-1}(\overline{\Omega})}\!:G^{-1}(\overline{\Omega})\to\overline{\Omega}$ is a $C^{m-1,\,\alpha}$-smooth diffeomorphism. Moreover, $G(\,\cdot\,,0)=\varphi$ on $\overline{\Delta}$ and
  $$
  \Delta\times\{0\}\subset G^{-1}(\Omega)\subset\Delta\times\mathbb C^{n-1}.
  $$

To proceed we let $r_0\!:\mathbb C^n\to \mathbb R$ be a $C^{m,\,\alpha}$-defining function for $\Omega$ with gradient  of length two on a neighborhood of $\partial\Omega$, and set
   $$
   r(w_1, w'):=|\varphi^{\ast}(w_1)|\,r_0\circ G(w_1, w'), \quad (w_1, w')\in\overline{\Delta}\times\mathbb C^{n-1}.
   $$
In this way $r\in C^{m-1,\,\alpha}(\overline{\Delta}\times\mathbb C^{n-1})\cap C^m({\Delta}\times\mathbb C^{n-1})$, and it is a defining function for $G^{-1}(\Omega)$. Moreover, it follows from the definitions of $G$ and $\varphi^{\ast}$ that
   $$
   \frac{\partial r}{\partial w_j}(w_1, 0)=\overline{w}_1\delta_{1j} \quad {\rm on}\ \, \partial\Delta
   $$
for $j=1,\ldots,n$. This means that for every $w_1\in\partial\Delta$, $(w_1, 0)\in\mathbb C^n$ is the unit outward normal to the boundary of $G^{-1}(\Omega)$ at $(w_1, 0)\in\partial\Delta\times\{0\}$. As $G(w_1, w')$ is linear in $w'$, we also have
   $$
   \frac{\partial^2 r}{\partial w_i\partial \overline{w}_j}(w_1,w')
   =|\varphi^{\ast}(w_1)|\sum_{k,\, l=1}^n\frac{\partial^2 r_0}{\partial \xi_k\partial \overline{\xi}_l}\circ G(w_1, w')\frac{\partial G_k}{\partial w_i}(w_1, w')\overline{\frac{\partial G_l}{\partial w_j}(w_1, w')}
   $$
for  $i,\, j=2,\ldots,n$. In particular, this implies
   $$
   \frac{\partial^2 r}{\partial w_i\partial \overline{w}_j}\in C^{m-2,\,\alpha}(\overline{\Delta}\times\mathbb C^{n-1}),\quad i,\, j=2,\ldots, n.
   $$
On the other hand, since $G^{-1}(\Omega)$ is obviously strongly pseudoconvex, the matrix
   $$
   R(w_1):=\bigg(\frac{\partial^2 r}{\partial w_i\partial \overline{w}_j}(w_1,0)\bigg)_{2\leq i,\,j\leq n}
   $$
is positive definite  for each $w_1\in\partial\Delta$. Hence by \cite[Th\'{e}or\`{e}me B]{Lempert81}, there is a solution $H=(h_{ij})_{2\leq i,\,j\leq n}$ to the Riemann--Hilbert problem:
$$
\left\{
   \begin{array}{ll}
         H\in \mathcal{O}(\Delta)\cap C^{m-2,\,\alpha}(\overline{\Delta}, {\rm GL}(n-1, \mathbb C)), \\
         H^tR\overline{H}=I_{n-1}\quad \hbox{on $\partial\Delta$,}
   \end{array}
\right.
$$
where ${\rm GL}(n-1, \mathbb C)$ denotes the complex general linear group  and $I_{n-1}$ is the $(n-1)\times (n-1)$ identity matrix. With a slight abuse of notation, we denote by $H$ the bihilomorphism
   $$
   \Delta\times\mathbb C^{n-1}\ni(z_1, z')\mapsto(z_1, H(z_1)z')\in\Delta\times\mathbb C^{n-1}.
   $$
Set
   $$
   D=(G\circ H)^{-1}(\Omega)
     :=\left\{(z_1, z')\in \Delta\times\mathbb C^{n-1}\!: (z_1, H(z_1)z')\in G^{-1}(\Omega)\right\}
   $$
and define
   $$
   F(z_1, z'):=G\circ H(z_1, z')=G(z_1, H(z_1)z'),\quad (z_1, z')\in\overline{D}.
   $$
Then
  \begin{equation}\label{ineq: D-pinched}
   \Delta\times\{0\}\subset D\subset\Delta\times\mathbb C^{n-1},
  \end{equation}
and $F\!: D\to \Omega$ is a biholomorphism satisfying  $F(\,\cdot\,,0)=G(\,\cdot\,,0)=\varphi$ on $\Delta$. That is, (i) and (ii) in the theorem are satisfied.

Let us now look at the boundary regularity of $D$ and $F$. By construction, it is clear that $\partial D$ is $C^{m-2,\,\alpha}$-smooth and $F\!: \overline{D}\to \overline{\Omega}$ is a $C^{m-2,\,\alpha}$-smooth diffeomorphism. We now prove more, namely that $\partial D$ is $C^{m-2,\,2\alpha}$- {\rm(}resp. $C^{m-1}$-{\rm)} smooth and $F\!: \overline{D}\to \overline{\Omega}$ is a $C^{m-2,\,2\alpha}$- {\rm(}resp. $C^{m-1}$-{\rm)} smooth  diffeomorphism when $0<\alpha\leq1 /2$ {\rm(}resp. $1/2<\alpha<1${\rm)}. To this end, applying the classical Hardy--Littlewood theorem to $H\in \mathcal{O}(\Delta)\cap C^{m-2,\,\alpha}(\overline{\Delta})$ we first get a constant $C>0$ satisfying
\begin{equation}\label{growth-est}
    |H^{(m-1)}(z_1)|\leq C(1-|z_1|)^{\alpha-1}
\end{equation}
for all $z_1\in\Delta$. Next, we show that there exists a constant $0<\varepsilon<<1$ such that
\begin{equation}\label{shape-est}
    D\subset D_{\varepsilon}:=\left\{(z_1, z')\in \Delta\times\mathbb C^{n-1}\!: |z_1|^2+\varepsilon|z'|^2<1\right\}.
\footnote{After a preprint of this paper was made available on the arXiv, the author was kindly informed by W{\l}odzimierz Zwonek that the proof of \eqref{shape-est} presented here was essentially already included in the proof of \cite[Lemma 3.2]{KZ16}.}
\end{equation}
Indeed, since $G(w_1,w')$ and $H(z_1,z')$ are linear in $w'$ and $z'$ respectively,  it is  easy to check that $D$ is strongly linearly convex near $\partial \Delta\times\{0\}$. Therefore, for every $z_1\in\partial \Delta$ the function $z'\mapsto r(z_1, H(z_1)z')$ is strongly convex on a neighborhood of $0$ in $\mathbb C^{n-1}$ (note that $\{z_1\}\times\mathbb C^{n-1}$ is the complex hyperplane tangent to $\partial D$ at $(z_1, 0)$, as follows immediately from \eqref{ineq: D-pinched}). Choose a  constant $d>0$ so large that $D\subset \Delta\times B^{n-1}(0, d)$. Then there exists a small constant $\varepsilon_0>0$ such that
\begin{equation*}\label{ineq:r-est1}
    r(z_1, H(z_1)z')\geq \varepsilon_0 |z'|^2
\end{equation*}
for all $(z_1,z')\in\partial\Delta\times B^{n-1}(0, d)$. On the other hand, we can also find a constant $C_0>0$ such that
\begin{equation*}\label{ineq:r-est2}
    \left|r(z_1, H(z_1)z')-r(z_1/|z_1|, H(z_1/|z_1|)z')\right|\leq C_0(1-|z_1|)
\end{equation*}
for all $(z_1,z')\in D\!\setminus\!(\{0\}\times\mathbb C^{n-1})$. Putting these two inequalities together yields
   $$
   r(z_1, H(z_1)z')\geq \varepsilon_0 |z'|^2-C_0(1-|z_1|^2)
   $$
for all $(z_1,z')\in D$. This implies that \eqref{shape-est} holds provided $\varepsilon:=\varepsilon_0/C_0$. Now by \eqref{growth-est} and \eqref{shape-est} we obtain
   $$
   |H^{(m-1)}(z_1)z'|\leq \frac{2C}{\varepsilon^{1-\alpha}}|z'|^{2\alpha-1}
   $$
for all $(z_1, z')\in \overline{D}\!\setminus\!(\overline{\Delta}\times\{0\})$. When $0<\alpha<1 /2$, a standard argument shows that  $\left. H\right|_{\partial D_{\varepsilon}}$ is $C^{m-2,\,2\alpha}$-smooth; equivalently, so is $\left. H\right|_{\overline{D}_{\varepsilon}}$ (see, e.g., \cite[Theorem 2.3]{Kyt-BMintegral}). When $\alpha=1/2$, $H^{(m-1)}$ is bounded on $\overline{D}_{\varepsilon}$, that is, $\left. H\right|_{\overline{D}_{\varepsilon}}\in C^{m-2,\,1}(\overline{D}_{\varepsilon})$; when  $1 /2<\alpha<1$, $H^{(m-1)}$ is continuous on $\overline{D}_{\varepsilon}$, i.e., $H\in C^{m-1}(\overline{D}_{\varepsilon})$. Since the Jacobian of $H$ is invertible on $\overline{\Delta}\times\mathbb C^{n-1}$, it then follows that $\left. H\right|_{\overline{D}_{\varepsilon}}\!: \overline{D}_{\varepsilon}\to H(\overline{D}_{\varepsilon})$ is necessarily a  $C^{m-2,\,2\alpha}$- {\rm(}resp. $C^{m-1}$-{\rm)} smooth diffeomorphism when $0<\alpha\leq1 /2$ {\rm(}resp. $1/2<\alpha<1${\rm)}. In particular, this implies that
   $$
   \partial D=(\left. H\right|_{\overline{D}_{\varepsilon}})^{-1}\big(\partial (G^{-1}(\Omega))\big)
   $$
is $C^{m-2,\,2\alpha}$- {\rm(}resp. $C^{m-1}$-{\rm)} smooth and $\left. H\right|_{\overline{D}}\!: \overline{D}\to G^{-1}(\overline{\Omega})$ is a  $C^{m-2,\,2\alpha}$- {\rm(}resp. $C^{m-1}$-{\rm)}  smooth diffeomorphism as well, when $0<\alpha\leq1 /2$ {\rm(}resp. $1/2<\alpha<1${\rm)}.

Now to complete the proof, it remains to seek a defining function $\rho$ for $D$ as described in ${\rm (iii)}$. Define
  $$
  \rho(z_1,z'):=\lambda(z_1,z')r(z_1, H(z_1)z'),\quad  (z_1, z')\in \overline{\Delta}\times\mathbb C^{n-1},
  $$
where
   $$
   \lambda(z_1,z'):=1-\frac12(1-|z_1|^2)\bigg(1-\frac{\partial^2 r}{\partial w_1\partial \overline{w}_1}(z_1,0)\bigg)-2{\rm Re}\bigg(\overline{z}_1\sum_{k,\,l=2}^nz_kh_{lk}(z_1)\frac{\partial^2 r}{\partial \overline{w}_1\partial w_l}(z_1,0)\bigg).
   $$
In this way, $\rho$ is obviously a defining function for $D$ near $\partial\Delta\times\{0\}$. To  show that $\lambda,\, \rho\in C^{m-2,\,\alpha}(\overline{\Delta}\times\mathbb C^{n-1})$, it suffices to verify that
   $$
   \frac{\partial^2 r}{\partial w_1\partial \overline{w}_j}(\,\cdot\,,0)\in C^{m-2,\,\alpha}(\overline{\Delta}),\quad   j=1, \ldots, n.
   $$
For this, differentiating the identity $r(\,\cdot\,,0)=|\varphi^{\ast}|\,r_0\circ\varphi$ directly yields
\begin{align*}
\begin{split}
\frac{\partial^2 r}{\partial w_1\partial \overline{w}_1}(w_1,0)
=\,&\frac12\bigg(|(\varphi^{\ast})'(w_1)|^2-\frac{|\langle(\varphi^{\ast})'(w_1), \varphi^{\ast}(w_1)\rangle|^2}{2|\varphi^{\ast}(w_1)|^2}\bigg)\frac{r_0\circ\varphi(w_1)}{|\varphi^{\ast}(w_1)|}\\
&+{\rm Re}\bigg(\frac{\langle(\varphi^{\ast})'(w_1), \varphi^{\ast}(w_1)\rangle}{|\varphi^{\ast}(w_1)|}\sum_{k=1}^{n}\frac{\partial r_0}{\partial \overline{\xi}_k}\circ\varphi(w_1)\overline{\varphi'_k(w_1)}\bigg)\\
&+|\varphi^{\ast}(w_1)|\sum_{k,\, l=1}^n\frac{\partial^2 r_0}{\partial \xi_k\partial \overline{\xi}_l}\circ\varphi(w_1)\varphi'_k(w_1)\overline{\varphi'_l(w_1)}\\
\in\,&C^{m-2,\,\alpha}(\overline{\Delta}).
\end{split}
\end{align*}
Similarly, since  $r(w_1, w')=|\varphi^{\ast}(w_1)|\,r_0\circ G(w_1, w')$, we have
\begin{align*}
\begin{split}
\frac{\partial^2 r}{\partial w_1\partial \overline{w}_j}(w_1,w')
=\,&\frac{\langle(\varphi^{\ast})'(w_1), \varphi^{\ast}(w_1)\rangle}{2|\varphi^{\ast}(w_1)|}
\sum_{k=1}^n\frac{\partial r_0}{\partial \overline{\xi}_k}\circ G(w_1, w')\overline{\frac{\partial G_k}{\partial w_j}(w_1, w')}\\
&+|\varphi^{\ast}(w_1)|\sum_{k,\, l=1}^n\frac{\partial^2 r_0}{\partial \xi_k\partial \overline{\xi}_l}\circ G(w_1, w')\frac{\partial G_k}{\partial w_1}(w_1, w')\overline{\frac{\partial G_l}{\partial w_j}(w_1, w')}\\
\in\, & C^{m-2,\,\alpha}(\overline{\Delta}\times\mathbb C^{n-1}),\quad j=2,\ldots,n.
\end{split}
\end{align*}

We are left to verify that $\rho$ satisfies the desired property. First of all, we have by construction the following transparent equalities when $z_1\in\partial\Delta$:
\begin{itemize}[leftmargin=2.0pc, parsep=4pt]
\item  $\lambda(z_1, 0)=1$;

\item $\displaystyle{\frac{\partial \lambda}{\partial \overline{z}_1}(z_1,0)=\frac12z_1\bigg(1-\frac{\partial^2 r}{\partial w_1\partial \overline{w}_1}(z_1,0)\bigg)}$;

\item $\displaystyle{\frac{\partial \lambda}{\partial \overline{z}_j}(z_1,0)=-z_1\sum_{l=2}^n \overline{h_{lj}(z_1)}\frac{\partial^2 r}{\partial w_1\partial \overline{w}_l}(z_1,0),\quad j=2,\ldots,n.}$
\end{itemize}
Then a straightforward calculation shows that for every $z_1\in\partial\Delta$,
\begin{align*}
   &\frac{\partial \rho}{\partial z_1}(z_1, 0)=\lambda(z_1, 0)\frac{\partial r}{\partial w_1}
       (z_1, 0)=\overline{z}_1;\\
   &\frac{\partial \rho}{\partial z_j}(z_1, 0)=\lambda(z_1, 0)\sum_{k=2}^n\frac{\partial r}{\partial
       w_k}(z_1, 0)h_{kj}(z_1)=0, \quad j=2,\ldots,n;\\
   &\frac{\partial^2 \rho}{\partial z_1\partial \overline{z}_1}(z_1,0)
       =2{\rm Re}\bigg(\frac{\partial \lambda}{\partial z_1}(z_1, 0)\frac{\partial r}{\partial
         \overline{w}_1}(z_1, 0)\bigg)+\lambda(z_1, 0)\frac{\partial^2 r}{\partial w_1\partial
         \overline{w}_1}(z_1,0)\\
   &\qquad\qquad\quad\,\,\:\:=2{\rm Re}\bigg(\frac{\partial \lambda}{\partial z_1}(z_1, 0)z_1\bigg)
    +\frac{\partial^2 r}{\partial w_1\partial \overline{w}_1}(z_1,0)=1;\\
   &\frac{\partial^2 \rho}{\partial z_1\partial \overline{z}_j}(z_1,0)
      =\frac{\partial \lambda}{\partial z_1}(z_1, 0)\sum_{k=2}^n\frac{\partial r}{\partial
       \overline{w}_k}(z_1, 0)\overline{h_{kj}(z_1)}+\frac{\partial \lambda}{\partial \overline{z}_j}(z_1,
       0)\frac{\partial r}{\partial w_1}(z_1,0)\\
   &\qquad\qquad\qquad\;\;\:\:+\lambda(z_1, 0)\sum_{k=2}^n\frac{\partial^2 r}{\partial w_1\partial
        \overline{w}_k}(z_1,0)\overline{h_{kj}(z_1)}\\
   &\qquad\qquad\quad\,\,\:\:=\frac{\partial \lambda}{\partial \overline{z}_j}(z_1, 0)\overline{z}_1+\sum_{k=2}^n\frac{\partial^2 r}{\partial w_1\partial \overline{w}_k}(z_1,0)\overline{h_{kj}(z_1)}=0, \quad j=2,\ldots,n;
\end{align*}
and
\begin{align*}
   &\frac{\partial^2 \rho}{\partial z_i\partial \overline{z}_j}(z_1,0)
      =\frac{\partial \lambda}{\partial z_i}(z_1, 0)\sum_{k=2}^n\frac{\partial r}{\partial
       \overline{w}_k}(z_1, 0)\overline{h_{kj}(z_1)}+\frac{\partial \lambda}{\partial \overline{z}_j}(z_1,
       0)\sum_{k=2}^n\frac{\partial r}{\partial w_k}(z_1,0)h_{ki}(z_1)\\
   &\qquad\qquad\qquad\;\;\:\:+\lambda(z_1, 0)\sum_{k,\, l=2}^n\frac{\partial^2 r}{\partial w_k\partial
       \overline{w}_l}(z_1,0)h_{ki}(z_1)\overline{h_{lj}(z_1)}\\
   &\qquad\qquad\quad\,\,\:\:=(H^tR\overline{H})_{ij}(z_1)=\delta_{ij},\quad i,\, j=2,\ldots,n.
\end{align*}
Furthermore, we have
\begin{align*}
\begin{split}
\frac{\partial^2 \rho}{\partial z_i\partial z_j}(z_1,0)
=\,&\frac{\partial \lambda}{\partial z_i}(z_1, 0)\sum_{k=2}^n\frac{\partial r}{\partial w_k}(z_1,
0) h_{kj}(z_1)+\frac{\partial \lambda}{\partial z_j}(z_1, 0)\sum_{k=2}^n\frac{\partial r}{\partial w_k}(z_1,0)h_{ki}(z_1)\\
&+\lambda(z_1, 0)\sum_{k,\, l=2}^n\frac{\partial^2 r}{\partial w_k\partial w_l}(z_1,0)h_{ki}(z_1)h_{lj}(z_1)\\
\in\,&C^{m-2,\,\alpha}(\overline{\Delta}),\quad i,\, j=2,\ldots,n.
\end{split}
\end{align*}
This gives \eqref{partial-regularity}. On the other hand, since $\frac{\partial r}{\partial w_1}(w_1,0)=\overline{w}_1$ for all $w_1\in\partial\Delta$, taking derivative along the tangential direction gives
   $$
   w_1^2\frac{\partial^2 r}{\partial w_1^2}(w_1,0)+\bigg(1-\frac{\partial^2 r}{\partial w_1\partial \overline{w}_1}(w_1,0)\bigg)=0
   $$
on $\partial\Delta$. The same manipulation applied to $\frac{\partial r}{\partial w_k}(\,\cdot\,,0)=0$ on $\partial\Delta$ yields
   $$
   w_1\frac{\partial^2 r}{\partial w_1\partial w_k}(w_1,0)=\overline{w}_1\frac{\partial^2 r}{\partial \overline{w}_1\partial w_k}(w_1,0)
   $$
on $\partial\Delta$ for all $2\leq k\leq n$. These identities in turn imply
\begin{align*}
\begin{split}
\frac{\partial^2 \rho}{\partial z^2_1}(z_1,0)
&=2\frac{\partial \lambda}{\partial z_1}(z_1, 0)\frac{\partial r}{\partial w_1}(z_1,
0)+\lambda(z_1, 0)\frac{\partial^2 r}{\partial w_1^2}(z_1,0)\\
&=\overline{z}^2_1\bigg(1-\frac{\partial^2 r}{\partial w_1\partial \overline{w}_1}(z_1,0)\bigg)+\frac{\partial^2 r}{\partial w_1^2}(z_1,0)=0
\end{split}
\end{align*}
and
\begin{align*}
\begin{split}
\frac{\partial^2 \rho}{\partial z_1\partial z_j}(z_1,0)
&=\frac{\partial \lambda}{\partial z_j}(z_1, 0)\frac{\partial r}{\partial w_1}(z_1,
0)+\lambda(z_1, 0)\sum_{k=2}^n\frac{\partial^2 r}{\partial w_1\partial w_k}(z_1,0)h_{kj}(z_1)\\
&=\sum_{k=2}^n\bigg(\frac{\partial^2 r}{\partial w_1\partial w_k}(z_1,0)-\overline{z}^2_1\frac{\partial^2 r}{\partial \overline{w}_1\partial w_k}(z_1,0)\bigg)h_{kj}(z_1)\\
&=0,\quad j=2,\ldots,n.
\end{split}
\end{align*}
Now by considering the Taylor expansion of $\rho$, one easily sees that $\rho$ does enjoy the desired property as in \eqref{asy-expansion}. Finally, since $\partial D$ is strongly linearly convex near $\partial\Delta\times\{0\}$, inequality \eqref{partial-convexity} follows immediately. The proof is complete.
\end{proof}

\end{document}